\documentclass[pdflatex,sn-apa]{sn-jnl}


\usepackage{graphicx}%
\usepackage{multirow}%
\usepackage{amsmath,amssymb,amsfonts}%
\usepackage{amsthm}%
\usepackage{mathrsfs}%
\usepackage[title]{appendix}%
\usepackage{xcolor}%
\usepackage{textcomp}%
\usepackage{manyfoot}%
\usepackage{booktabs}%
\usepackage{algorithm}%
\usepackage{algorithmicx}%
\usepackage{algpseudocode}%
\usepackage{listings}%
\usepackage{mathenv_no_unicode}


\theoremstyle{plain}%
\newtheorem{theorem}{Theorem}
%
\newtheorem{lemma}[theorem]{Lemma}%

\theoremstyle{remark}%
\newtheorem{example}{Example}%

\theoremstyle{definition}%

\raggedbottom

\begin{document}

\title[The differential structure shared by probability and moment matching priors on non-regular statistical models via the Lie derivative]
{The differential structure shared by probability and moment matching priors on non-regular statistical models via the Lie derivative\footnote{This is a preprint submitted to \textit{Sankhya A}.}}


\author*{\fnm{Masaki} \sur{Yoshioka}}\email{yoshioka@sigmath.es.osaka-u.ac.jp}

\author{\fnm{Fuyuhiko} \sur{Tanaka}}\email{ftanaka.celas@osaka-u.ac.jp}


\affil{\orgname{The University of Osaka}, \orgaddress{\city{Osaka}, \country{Japan}}}



\abstract{
	In Bayesian statistics, the selection of noninformative priors is a crucial issue. There have been various discussions on theoretical justification, problems with the Jeffreys prior, and alternative objective priors. Among them, we focus on two types of matching priors consistent with frequentist theory: the probability matching priors and the moment matching priors. In particular, no clear relationship has been established between these two types of priors on non-regular statistical models, even though they share similar objectives.

	Considering information geometry on a one-sided truncated exponential family, a typical example of non-regular statistical models, we find that the Lie derivative along a particular vector field provides the conditions for both the probability and moment matching priors. Notably, this Lie derivative does not appear in regular models. These conditions require the invariance of a generalized volume element with respect to differentiation along the non-regular parameter. This invariance leads to a suitable decomposition of the one-sided truncated exponential family into one-dimensional submodels. This result promotes a unified understanding of probability and moment matching priors on non-regular models.
}

\keywords{Information geometry, Bayesian statistics, truncated exponential family, probability matching prior, moment matching prior}


\pacs[MSC Classification]{Primary 62F15; Secondary 62B11} 

\maketitle

\section{Introduction}\label{sec:introduction}
	The choice of noninformative priors is one of the challenges in Bayesian statistics. Since the effectiveness of Bayesian methods depends on priors, it is necessary to set up an appropriate prior for statistical analysis according to statistical models and tasks. We can use a subjective prior if we know something about the parameters. However, ``objectivity'' in the prior often makes Bayesian methods effective when we do not know the parameters. Such a prior is called a noninformative prior or an objective prior.

Theoretical studies on noninformative priors have a long history and many objectivity criteria and resulting priors. \cite{jeffreys1961TheoryProbability} derives an invariant prior under parameter transformations, so-called the Jeffreys prior. The theory of reference priors justifies the Jeffreys prior \citep{bernardo1979ReferencePosteriorDistributions}. 
Furthermore, the probability matching prior and the moment matching prior are also well-known noninformative priors that combine the frequentist theory and the Bayesian statistical theory. 

The probability matching prior matches the posterior and frequentist probabilities of the confidence interval. \cite{welch1963FormulaeConfidencePoints} introduce this idea, which has been developed since then (see also \cite{peers1965ConfidencePointsBayesian}, \cite{tibshirani1989NoninformativePriorsOne}, \cite{ghosh1992NoninformativePriors}, \cite{datta2004ProbabilityMatchingPriors} and \cite{sweeting2008PredictiveProbabilityMatching}). Probability matching priors also have invariance under parameter transformations \citep{datta1996InvarianceNoninformativePriors}. On the other hand, the moment matching prior by \cite{ghosh2011MomentMatchingPriors} matches the Bayesian posterior mean and the maximum likelihood estimator. Using moment matching priors, the posterior mean has the asymptotic optimality of the maximum likelihood estimator, and the bias correction can also be performed.

On the other hand, there are also many studies of noninformative priors in the non-regular statistical models where the support of the distributions depends on the parameters (\cite{ghosal1997ReferencePriorsMultiparameter, ortega2016GeneralizationJeffreysRule, hashimoto2021PredictiveProbabilityMatching, shemyakin2023HellingerInformationMatrix}). Bayesian statistics for the non-regular models is also essential since these models have many applications \citep{lancaster1997ExactStructuralInference,brown1995StochasticSpecificationRandom}. In particular, the model by \cite{ghosal1995AsymptoticBehaviourBayes} has been investigated well. 
\cite{ghosal1999ProbabilityMatchingPriors} gives the probability matching prior, and \cite{hashimoto2019MomentMatchingPriors} provides the moment matching prior in this model.

We construct a theory that treats probability and moment matching priors in a unified manner in these kinds of non-regular models. 
Two matching priors have a similar purpose: choosing a prior distribution that matches the frequentist theory. However, their relationship could have been more precise, although they have been discussed separately. Therefore, by considering the information geometry of non-regular models, we clarify the structure of the two matching priors.

Information geometry is helpful in statistical theory \citep{amari2000MethodsInformationGeometry}, including figuring out noninformative priors. For regular models, \cite{takeuchi2005AlphaparallelPriorIts} give a family of noninformative priors called the $\alpha$-parallel priors from a geometric point of view. \cite{tanaka2023GeometricPropertiesNoninformative} discovers geometric properties of some noninformative priors and, in particular, clarifies that the conditions of the moment matching prior depend on the geometric properties. However, the use of information geometry in non-regular models is limited. Recently, \cite{yoshioka2023AlphaparallelPriorsOneSided} discuss information geometry of a one-sided truncated exponential family (oTEF), a typical non-regular model.

The present paper provides sufficient conditions and the characterization of the probability and moment matching priors for multivariate non-regular models, especially for an oTEF. In this model, we derive the asymptotic expansion of the posterior distribution and the partial differential equations for the two matching priors with nuisance parameters. Furthermore, by restricting the model to an oTEF, we represent those partial differential equations by the Riemannian metric and the $\alpha$-connection coefficients \citep{yoshioka2023AlphaparallelPriorsOneSided}. Then, the Lie derivative along the common vector field appears in the conditions of the two types of matching priors. These conditions require an invariance of a generalization of the volume element with respect to the differentiation with respect to the non-regular parameter under a parameter transformation. This geometric property induces a natural $-1$-dimensional submodel.

This paper is organized as follows. Section 2 defines the one-sided truncated family and the notations. In Section 3, we derive the conditions of the probability matching priors on the oTF. In Section 4, we also derive the conditions of the moment matching priors on the oTF. Then, we discuss the relationship between the two types of matching priors in Section 5. Finally, Section 6 summarizes these results.

\section{One-sided truncated family}\label{sec:preliminaries}
	A one-sided truncated family \citep{akahira2021MaximumLikelihoodEstimation}, shortly an oTF, is a typical non-regular statistical model with a parameter-dependent support. Let $\Theta$ be an open subset of $ \bR^d $ and $ I=\paren{ I_1,I_2 } $ be an open interval, where $ -\infty \leq I_{1}<I_{2}\leq \infty $.
Consider a parametrized family $ \model=\Set{ P_{\theta,\gamma}: \theta\in \Theta,\gamma \in I } $ of probability distributions $P_{\theta,\gamma}$, having a density
\begin{align}
	p(x;\theta,\gamma) & =  q(x;\theta)\e^{-\psi(\theta,\gamma)} \cdot \indi_{[\gamma,I_2)} (x) \quad \paren{x\in I}
	\label{eq:oTF-density}
\end{align}
with respect to the Lebesgue measure, where $ q(x;\theta) $ is positive. This family $ \model $ is called a \textit{one-sided truncated family (oTF)}, or more precisely, a \textit{lower-truncated family (lTF)}.
We call the parameter $ \gamma $ the \textit{truncation parameter}, and the parameter $ \theta=(\theta^{1},\ldots,\theta^{d}) $ the \textit{regular parameter}.
Suppose that $ \model $ is identifiable in the sense that for any $ \theta_{1}, \theta_{2}\in \Theta $ and $ \gamma \in I $, $ P_{\theta_{1},\gamma} = P_{\theta_{2},\gamma} $ implies $ \theta_{1} = \theta_{2} $.
We also assume that $ p(x;\theta,\gamma) $ is infinitely differentiable in $ \theta $ and $ \gamma $ on the interval $ (\gamma,I_{2}) $.
An oTF is a non-regular statistical model because the support $ [\gamma, I_{2}] $ of the distribution depends on the truncation parameter $ \gamma $.
Note that the submodel $ \Set{ P_{\theta,\gamma}:\theta\in \Theta } $ is regular for any $ \gamma\in I $.

We also consider a one-sided truncated exponential family, a submodel of an oTF. When the function $ q(x;\theta) $ has the form
\begin{align}
	q(x;\theta) & = \exp\braces{ \sum_{i=1}^d \theta^iF_i(x)+M(x) }\quad \paren{x\in I},
	\label{eq:oTEF-density}
\end{align}
where $ M\in C(I),\, F_i\in C^\infty(I)\,(i=1,\ldots,d)$, we call the family $ \model_{e}=\Set{ P_{\theta,\gamma}: \theta\in \Theta,\gamma \in I } $ a \textit{one-sided truncated exponential family (oTEF)}.
In this case, we call the regular parameters $ \theta $ \textit{natural parameters}.
\cite{bar-lev1984LargeSampleProperties}, \cite{akahira2016SecondOrderAsymptotic} and \cite{akahira2017StatisticalEstimationTruncated} investigate the statistical properties of the oTEF in detail. For any fixed parameter $ \gamma\in I $, the submodel $ \Set{ P_{\theta,\gamma}:\theta\in \Theta } $ is an exponential family.

Let $ X_{1},\ldots, X_{n} $ be i.i.d. random samples from a distribution $ p(x;\theta,\gamma) $ in an oTF $ \model $. The MLE of $ \gamma $ is given by
\begin{align*}
	\hat{\gamma}_{\ML} = \min_{1\leq i\leq n}X_{i}.
\end{align*}
Assume that there exists a unique solution $ \hat{\theta}_{\ML}^{i}\:(1 \leq i \leq n) $ of the likelihood equation
$ \paren{ 1/n } \sum_{j=1}^{n}\log p(x_{j};\theta,x_{(1)})=0 $ for any $ x=(x_{1},\ldots,x_{n})\in (\gamma,I_{2})^{n} $. Then, there exists an MLE $ \hat{\theta}_{\ML}^{i} $ that satisfies the likelihood equation \citep{akahira2021MaximumLikelihoodEstimation}
\begin{align*}
	\sum_{j=1}^{n}\pd_{i}\log p(X_{j};\theta,\hat{\gamma}_{\ML})=0.
\end{align*}
Note that the two MLEs $ \hat{\theta}_{\ML}^{i},\,\hat{\gamma}_{\ML} $ have different orders of convergence. The subsequent sections will provide further details on this difference. Writing the expectations of the derivatives of the log-likelihood functions in vector notation simplifies the presentation of results in the following sections. Define $ \pd_{i} \coloneqq \pd/\pd \theta^{i} $ for $ i=1,\ldots,d $ and $ \pd_{\gamma} \coloneqq \pd/\pd \gamma $. Let $ D_{\theta} \coloneqq \paren{ \pd_{1},\ldots,\pd_{d} }^{\top} $. We write the Kronecker product of two matrices $ A $ and $ B $ as $ A\otimes B $. The Kronecker product of $ r $ copies of matrix $ A $ is denoted by $ A^{\otimes r} $. We define
\begin{align}
	A^{\paren{ r,s }}(\theta,\gamma) & \coloneqq \Exp[D_{\theta}^{\otimes r} \paren{ \pd_{\gamma} }^{s} \log p(X_{1};\theta,\gamma)] \quad \paren{ r,s = 0,1,2, \ldots },                                               \\
	c(\theta,\gamma)                 & \coloneqq A^{\paren{ 0,1 }}(\theta,\gamma)= \Exp[\pd_{\gamma}\log p(X,\theta,\gamma)]= -\pd_{\gamma}\psi(\theta,\gamma),                                                         \\
	\hat{A}^{\paren{ r,s }}          & \coloneqq \frac{ 1 }{ n }\sum_{i=1}^{n} D_{\theta}^{\otimes r}\paren{ \pd_{\gamma} }^{s}\log p(X_{i},\hat{\theta}_{\ML},\hat{\gamma}_{\ML}) \quad \paren{ r,s = 0,1,2, \ldots }, \\
	\hat{c}                          & \coloneqq \frac{ 1 }{ n }\sum_{i=1}^{n}\pd_{\gamma}\log p(X_{i};\hat{\theta}_{\ML},\hat{\gamma}_{\ML}).
\end{align}
$ A^{\paren{ r,s }}(\theta,\gamma) $ and $ \hat{A}^{\paren{ r,s }} $ are $ d^{r} $-dimensional vectors.
Each component of $ A^{\paren{ r,s }}(\theta,\gamma) $ is written as
\begin{align*}
	A^{\paren{ 1,s }}_{i}(\theta,\gamma)   & \coloneqq \Exp[\pd_{i} \paren{ \pd_{\gamma} }^{s} \log p(X_{1};\theta,\gamma)],              \\
	A^{\paren{ 2,s }}_{ij}(\theta,\gamma)  & \coloneqq \Exp[\pd_{i}\pd_{j} \paren{ \pd_{\gamma} }^{s} \log p(X_{1};\theta,\gamma)],       \\
	A^{\paren{ 3,s }}_{ijk}(\theta,\gamma) & \coloneqq \Exp[\pd_{i}\pd_{j} \pd_{k}\paren{ \pd_{\gamma} }^{s} \log p(X_{1};\theta,\gamma)]
\end{align*}
for $ i,j,k=1,\ldots,d $ and $ s=0,1,2,\ldots $. We use similar notation for the components of $ \hat{A}^{\paren{ r,s }} $.
We sometimes omit the arguments as $ A^{(r,s)},\; c $ for $ A^{(r,s)}(\theta,\gamma),\;c(\theta,\gamma) $ when the arguments are clear from context.
Furthermore, we abbreviate the transposed vector $ \paren{ A^{(r,s)} }^{\top} $ as $ A^{(r,s)\top} $.

We introduce the notation of information geometry to examine the geometric aspects of the two matching priors. We use the Riemannian metric and the $ \alpha $-connections defined by \cite{yoshioka2023AlphaparallelPriorsOneSided} for the oTF $ \model $. The Riemannian metric $ g $ on $ \model $ is defined by
\begin{align}
	g_{ij}(\theta,\gamma)           & \coloneqq -\Exp[\pd_{i}\pd_{j}\log p(X_{1};\theta,\gamma)] \quad \paren{ i,j=1,\ldots,d }, \\
	g_{i\gamma}(\theta,\gamma)      & \coloneqq 0 \quad \paren{ i=1,\ldots,d },                                                  \\
	g_{\gamma\gamma}(\theta,\gamma) & \coloneqq \braces{ \pd_{\gamma}\psi(\theta,\gamma) }^{2}.
\end{align}
The submatrix $ g_{\theta}=\paren{ g_{ij}(\theta,\gamma) }_{1\leq i,j\leq d} $, consisting of the regular part of $ g $, is the Fisher information matrix of the submodel $ \Set{ P_{\theta,\gamma}:\theta\in \Theta } $ for $ \gamma \in I $.
Let $ \hat{g}_{\theta} $ denote a matrix consisting of $ \hat{A}^{\paren{ 2,0 }}_{ij} $ for $ i,j=\Ilist $ according to the relation $ g_{\theta} = \paren{ -A^{\paren{ 2,0 }}_{ij} }_{1\leq i,j\leq d} $.
Note that the full matrix $ \paren{ g_{ab} }\; \paren{ a,b=\Ilist* } $ corresponds to the asymptotic covariance of the vector $ \paren{ \qML,\cML } $ \citep{yoshioka2023InformationGeometricApproachOneSided}.
Let $\Gamma^{g} $ denote the Levi-Civita connection coefficients with respect to the metric $ g $, given by
\begin{align*}
	\Gamma^{g}_{ab,c}(\theta,\gamma) & = \frac{ 1 }{ 2 }\paren{ \pd_{a}g_{bc}(\theta,\gamma)+\pd_{b}g_{ca}(\theta,\gamma)-\pd_{c}g_{ab}(\theta,\gamma) }
\end{align*}
for $ a,b,c=\Ilist* $.
We also define the $ \alpha $-connections on $ \model $ with the connection coefficients
\begin{align*}
	\aChris{ab,c} (\theta,\gamma) & \coloneqq \alpha\Exp\bracket{ \paren{ \partial_a \partial_b l_{X_{1},\theta,\gamma} } \, \paren{ \partial_c l_{X_{1},\theta,\gamma} } }+(1 - \alpha)\Gamma^g_{ab,c}
\end{align*}
for $ \alpha \in \bR $ and $ a,b,c=\Ilist* $,
where $ l_{X_{1},\theta,\gamma}=\log p (X_{1};\theta,\gamma) $.
In particular, the symbol $ \aChris[1]{ab,c} $, also called the e-connection, is denoted by $\aChris[e]{ab,c}$.
Here, we ignore the null set $ \braces{ x=\gamma } $ where $ \log p(x;\theta,\gamma) $ is not differentiable in the above expectations.
Only the regular parts $ \aChris{ij,k} $ satisfy
\begin{align}
	\aChris{ij,k} (\theta,\gamma) & = \Gamma^g_{ij,k} (\theta,\gamma) - \frac{ \alpha }{ 2 }\Exp[\paren{ \pd_{i}l_{X_{1},\theta,\gamma} }\paren{ \pd_{j}l_{X_{1},\theta,\gamma} }\paren{ \pd_{k}l_{X_{1},\theta,\gamma} }]
	\label{eq:alpha-connection}
\end{align}
for $ i,j,k=\Ilist $.
We will use the Einstein notation for the two types of indices $ a,b,c,d \in \braces{ \Ilist* }$ and $ i,j,k,l,m \in \braces{ \Ilist } $ throughout this paper. A pair of subscript and superscript indices implies summation over those indices. For example, the terms
\begin{align}
	\sum_{b=1}^{d+1} g^{ab}\pd_{b},\quad \sum_{j=1}^{d} g^{ij}\pd_{j}
\end{align}
are abbreviated as $ g^{ab}\pd_{b} $ and $ g^{ij}\pd_{j} $, where $ g^{ij}=g^{ij}(\theta,\gamma) $. The symbol $ g^{ij} $ is the $ (i,j) $ component of $ g^{-1} $.

\begin{example}[Truncated exponential distributions]
	Consider the family of truncated exponential distributions with the density
	\begin{align}
		p(x;\theta,\gamma) & = \theta \e^{-\theta(x-\gamma)}\cdot \indi_{[\gamma,\infty)}(x) \quad \paren{x\in \bR} \label{eq:truncated_exponential_density}
	\end{align}
	with $ \Theta = \paren{ 0,\infty }, I = \bR $ and $ q(x;\theta) = \e^{-\theta x} $.
	It follows that $ \psi(\theta,\gamma) = -\theta \gamma - \log \theta $.
	This family is an oTEF with $ d=1 $.
\end{example}

\begin{example}(Truncated normal distribution)
	Let $ \model $ be the family of truncated normal distributions with the density
	\begin{align}
		p(x;\mu,\sigma,\gamma) & =\frac{1}{\sigma}\phi\paren{\frac{ x-\mu }{ \sigma }}\exp\braces{ -\log(1-\Phi(\frac{ \gamma-\mu }{ \sigma })) } \cdot \indi_{[\gamma,\infty)}(x) \quad \paren{x\in \bR}
		\label{eq:truncated_normal_density}
	\end{align}
	with $ (\mu,\sigma) \in \Theta = \bR \times \paren{ 0,\infty },\, \gamma \in \bR, \, q(x;\mu,\sigma)=\phi((x-\mu)/\sigma) $. Here, $ \phi(x) $ and $ \Phi(x) $ are the density and the distribution function of the standard normal distribution, respectively.
	Hereafter, $ N(\mu,\sigma,\gamma) $ denotes the truncated normal distribution with the above density.
	The family of truncated normal distributions is also an oTEF with the natural parameters $ (\alpha,\beta)=(\mu/\sigma^{2}, -1/2\sigma^{2})\in \bR\times \paren{ -\infty,0 } $ and the density
	\begin{align}
		p(x;\alpha,\beta,\gamma) & = \frac{ 1 }{ \sqrt{ \pi } }\exp\braces{ \alpha x + \beta x^{2} + \frac{ 1 }{ 2 }\log(-\beta)+\frac{ \alpha^{2} }{ 4\beta } -\log\paren{1-\Phi\paren{\nu}} } \cdot \indi_{[\gamma,\infty)}(x)
		\label{eq:truncated_normal_density_natural}
	\end{align}
	for $ x\in \bR $, where $ \nu=\gamma\sqrt{-2\beta}-\frac{\alpha}{\sqrt{-2\beta}} $.
\end{example}

\section{Probability matching priors for non-regular models with regular multiparameters}\label{sec:probability_matching_priors}

Let $ U^{i}\coloneqq \sqrt{n}\paren{\theta^{i}- \hat{\theta}^{i}_{\ML} }\quad \mathrm{for} \quad i=\Ilist,\, U\coloneqq \paren{ U^{1},\ldots,U^{d} }^{\top}$, and $\,T\coloneqq n\hat{c}\paren{ \gamma-\hat{\gamma}_{\ML} } $.
$ U $ converges in distribution to the normal distribution $ N(0,g_{\theta}^{-1}(\theta,\gamma)) $ as $ n\to\infty $.
On the other hand, $ T $ converges in distribution to the exponential distribution $ \mathrm{Exp}(1) $ as $ n\to\infty $.
Consider a smooth prior density $ \pi $ on $ \Theta \times I$, which satisfies the following property matching the frequentist and posterior probabilities:
\begin{align*}
  P_{\theta,\gamma}^{n}\paren{ \frac{U^i}{\sqrt{\hat{g}^{ii}}} \leq z  }=P_{\pi}^{n}\paren{ \frac{U^i}{\sqrt{\hat{g}^{ii}}} \leq z  \mid \sample } + \bigOp{\frac{ 1 }{ n }}
\end{align*}
for all $ z \in \bR$, where $ \sample=(X_{1},\ldots,X_{n}) $. Here, $ P_{\theta,\gamma}^{n} $ denotes the joint distribution of $ \sample $, and $ P_{\pi}^{n}(\cdot \mid \sample) $ is the posterior probability given $ \sample $.
Such a prior is called a \textit{probability matching prior} for the regular parameter $ \theta^{i}\: (i=1,\ldots,d) $ \citep{datta1995PriorsProvidingFrequentist}, and is denoted by $ \pmp[i] $.
If the prior $ \pi $ also satisfies
\begin{align*}
  P_{\theta,\gamma}^{n}\paren{ T\leq z }=P_{\pi}^{n}\paren{ T \leq z \mid \sample } + \bigOp{\frac{ 1 }{ n^{2} }},
\end{align*}
then the prior $ \pmp[\gamma] $ is a probability matching prior for the truncation parameter $ \gamma $ \citep{ghosal1999ProbabilityMatchingPriors}.
The following theorems extend the results of \cite{ghosal1999ProbabilityMatchingPriors} to the multivariate case, restricted to the oTF.
Ghosal considered only the case $ d=1 $. Our results may also hold for Ghosal's non-regular model, but the proof is more involved.

To derive the probability matching prior for the oTF, we consider the following asymptotic expansion of the posterior density.

\begin{lemma}\label{lem:asymptotic_expansion_posterior_density}
Let $ \hat{\theta}_{\ML} $ and $ \hat{\gamma}_{\ML} $ be the MLEs of $ \theta $ and $ \gamma $.
With $ u=\sqrt{n}\paren{ \theta-\qML },\,t=n\hat{c}(\gamma-\cML) $, the posterior density $ \pi(u,t;\sample) $ admits the asymptotic expansion
\begin{align}
\pi\paren{ u,t;\sample } & = \frac{ 1 }{ \paren{ 2\pi }^{d/2}\sqrt{\det \hat{g}_{\theta}}^{-1} }\e^{t - u^{\top}\hat{g}_{\theta} u/2  }\bracket{ 1+\frac{ 1 }{ \sqrt{n} }B_{1}(u,t) + \frac{ 1 }{ n }B_{2}(u,t) + \bigOp{ \frac{ 1 }{ n^{3/2} } } },
\label{eq:asymptotic_expansion_posterior_density}
\end{align}
where $ \hat{\pi} = \pi(\qML, \cML) $, and
\begin{align*}
B_{1} & = \frac{ 1 }{ \hat{\pi} }D_{\theta}^{\top}\hat{\pi}  u + \hat{A}^{(1,1)\top} u \frac{ t }{ \hat{c} } + \frac{ 1 }{ 3! } \hat{A}^{(3,0)\top}  u^{\otimes{ 3 }}, \\
B_{2} & = \frac{ 1 }{ \hat{c}\hat{\pi} }\pd_{\gamma}\hat{\pi}\paren{ t+1 }
+ \frac{ 1 }{ 2\hat{\pi} } D_{\theta}^{\otimes{ 2 }}\hat{\pi} (u^{\otimes{ 2 }}-\mathrm{vec} \,\hat{g}_{\theta}^{-1}) \\
& \quad +\frac{ 1 }{ \hat{c}\hat{\pi}  } D_{\theta}^{\top}\hat{\pi}\otimes \hat{A}^{(1,1)\top}   \paren{u^{\otimes{ 2 }}t +\mathrm{vec} \,\hat{g}_{\theta}^{-1} }
+ \frac{ 1 }{ 3!\hat{\pi} } D_{\theta}^{\top}\hat{\pi}\otimes\hat{A}^{(3,0)\top}   \paren{ u^{\otimes{ 4 }}- 3\mathrm{vec} \paren{\hat{g}_{\theta}^{-1}}^{\otimes{ 2 }} } \\
& \quad + \frac{ 1 }{ 2\hat{c}^{2} }\hat{A}^{(0,2)}\paren{ t^{2}-2 } - \frac{ 1 }{ 2\hat{c}}\hat{A}^{(2,1)\top} \paren{u^{\otimes{ 2 }}t + \mathrm{vec} \,\hat{g}_{\theta}^{-1} }
+\frac{ 1 }{ 4! }\hat{A}^{(4,0)\top}  \paren{ u^{\otimes{ 4 }}-3\mathrm{vec} \paren{\hat{g}_{\theta}^{-1}}^{\otimes{ 2 }} } \\
& \quad + \frac{ 1 }{2 \hat{c}^{2} }\paren{ \hat{A}^{(1,1)\top} }^{\otimes{ 2 }}   \paren{ u^{\otimes{ 2 }}t^{2}-2\mathrm{vec} \,\hat{g}_{\theta}^{-1} }
+ \frac{ 1 }{2\cdot 3!^{2} }\paren{ \hat{A}^{(3,0)\top} }^{\otimes{ 2 }} S_{6}   \paren{ u^{\otimes{ 6 }}-15\mathrm{vec} \paren{\hat{g}_{\theta}^{-1}}^{\otimes{ 3 }}  } \\
& \quad +\frac{ 1 }{ 3!\hat{c} }\paren{ \hat{A}^{(1,1)} \otimes \hat{A}^{(3,0)} }^{\top}  \paren{  u^{\otimes{ 4 }}t+3\mathrm{vec} \paren{\hat{g}_{\theta}^{-1}}^{\otimes{ 2 }} }.
\end{align*}
Here, $ S_{6} \in \bR^{d^{6}\times d^{6}}$ is the symmetrizer matrix defined by \eqref{eq:symmetrizer_matrix_definition} in Appendix \ref{sec:appendix_pf_asymptotic_expansion_posterior_density}.
\end{lemma}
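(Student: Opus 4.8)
The plan is to build the expansion from a structural feature of the oTF that the passage to $(u,t)$ makes transparent. Because $p(x;\theta,\gamma)$ is supported on $[\gamma,I_2)$, the joint likelihood $\prod_{i=1}^{n}p(X_i;\theta,\gamma)$ vanishes unless $\gamma\le\min_i X_i=\cML$, so the posterior is concentrated on $\{\gamma\le\cML\}$, i.e.\ on $t\le0$. On that region every indicator equals one and
\[ \sum_{i=1}^{n}\log p(X_i;\theta,\gamma)=\sum_{i=1}^{n}\log q(X_i;\theta)-n\psi(\theta,\gamma), \]
which is $C^{\infty}$ in $(\theta,\gamma)$ since $\psi$ is smooth. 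Thus the only residue of non-regularity is the hard constraint $t\le0$ together with the fact that the $\gamma$-score does not vanish at the MLE: writing $\ell_n$ for the log-likelihood, $\pd_\gamma\ell_n|_{(\qML,\cML)}=n\hat{c}\neq0$, whereas $\pd_i\ell_n|_{(\qML,\cML)}=0$ by the likelihood equation and $\pd_i\pd_j\ell_n|_{(\qML,\cML)}=n\hat{A}^{(2,0)}_{ij}$. I would record these three facts first, since they generate, respectively, the factor $\e^{t}$, the absence of a linear-in-$u$ term, and the Gaussian factor $\e^{-u^{\top}\hat{g}_{\theta}u/2}$ (using $\hat{g}_{\theta}=(-\hat{A}^{(2,0)}_{ij})$).

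Next I would change variables by $\theta=\qML+u/\sqrt{n}$, $\gamma=\cML+t/(n\hat{c})$, absorbing the constant Jacobian $n^{-d/2}(n\hat{c})^{-1}$ into the normalization, and Taylor-expand $\ell_n-\ell_n|_{(\qML,\cML)}$. The key accounting rule is that $D_{\theta}^{\otimes r}(\pd_\gamma)^{s}\ell_n$ evaluated at the MLE equals $n\hat{A}^{(r,s)}$, while $(u/\sqrt{n})^{\otimes r}(t/n\hat{c})^{s}$ carries $n^{-r/2-s}$, so the $(r,s)$ Taylor term scales as $n^{1-r/2-s}$. The exponents $(0,1)$ and $(2,0)$ give the order-one exponent $t-\tfrac12 u^{\top}\hat{g}_{\theta}u$; the would-be order-$n^{1/2}$ term $(1,0)$ is annihilated by the likelihood equation; $(3,0)$ and $(1,1)$ give the order-$n^{-1/2}$ polynomial $R_1$; and $(4,0),(2,1),(0,2)$ give the order-$n^{-1}$ polynomial $R_2$. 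Expanding the prior similarly --- its $\theta$-gradient entering at order $n^{-1/2}$ but its $\gamma$-gradient only at order $n^{-1}$ because of the faster $n$-rate in $t$ --- and multiplying by $\exp(R_1/\sqrt{n}+R_2/n+\cdots)$ yields an unnormalized density $\hat{\pi}\,\e^{t-u^{\top}\hat{g}_{\theta}u/2}\,[1+C_1/\sqrt{n}+C_2/n+\cdots]$, with $C_1=R_1+\hat{\pi}^{-1}D_{\theta}^{\top}\hat{\pi}\,u$ and $C_2=R_2+\tfrac12 R_1^{2}+(\hat{\pi}^{-1}D_{\theta}^{\top}\hat{\pi}\,u)R_1+P_2$, where $P_2$ gathers the second-order prior terms.

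The displayed $B_1,B_2$ then emerge from normalizing against the leading measure, the product of $N(0,\hat{g}_{\theta}^{-1})$ in $u$ and the reflected-exponential density $\e^{t}$ on $t\le0$; the leading constant is the reciprocal of $\int\e^{t-u^{\top}\hat{g}_{\theta}u/2}\,du\,dt=(2\pi)^{d/2}(\det\hat{g}_{\theta})^{-1/2}$. Dividing by the total mass replaces each monomial by its centered version: at first order $\Exp[C_1]=0$ (odd Gaussian moments and $\Exp[t]$ enter multiplicatively), so $B_1=C_1$ reproduces the stated expression, while at second order $B_2=C_2-\Exp[C_2]$. The Gaussian moments $\Exp[u^{\otimes 2}]=\mathrm{vec}\,\hat{g}_{\theta}^{-1}$, $\Exp[u^{\otimes 4}]=3\,\mathrm{vec}(\hat{g}_{\theta}^{-1})^{\otimes 2}$ and $\Exp[u^{\otimes 6}]=15\,\mathrm{vec}(\hat{g}_{\theta}^{-1})^{\otimes 3}$ produce the combinations $u^{\otimes 2}-\mathrm{vec}\,\hat{g}_{\theta}^{-1}$, $u^{\otimes 4}-3\,\mathrm{vec}(\hat{g}_{\theta}^{-1})^{\otimes 2}$ and (with the symmetrizer $S_6$) $u^{\otimes 6}-15\,\mathrm{vec}(\hat{g}_{\theta}^{-1})^{\otimes 3}$, while $\Exp[t]=-1$ and $\Exp[t^2]=2$ produce $t+1$ and $t^2-2$. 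Matching coefficients against the lemma is then a direct, if lengthy, tensor computation.

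The main obstacle is the rigorous control of the remainder in the non-regular direction. The usual regular-model expansions do not transfer verbatim to $t$, whose limit law is exponential rather than Gaussian and whose parameter region has a genuine boundary at $t=0$. I would need to show that the posterior mass outside a shrinking neighborhood of the MLE is negligible to the required order, that the $\e^{t}$ factor controls the $t$-tail uniformly across the higher-order corrections, and that these bounds legitimize interchanging the Taylor expansion with the normalizing integration, so that the claimed $\bigOp{n^{-3/2}}$ error is genuinely integrable. This step leans on the $n$-consistency of $\cML$ and the $\sqrt{n}$-consistency and asymptotic normality of $\qML$, and it is where the non-regular structure makes the analysis substantially more delicate than in the classical regular case.
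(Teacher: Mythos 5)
Your proposal follows essentially the same route as the paper's own proof: change variables to $(u,t)$, Taylor-expand the log-likelihood and the prior at the MLE (using the likelihood equation to kill the linear-in-$u$ term and the non-vanishing $\gamma$-score to produce the $\e^{t}$ factor, with the $(r,s)$-term scaling as $n^{1-r/2-s}$), exponentiate, and normalize against the product of the $N(0,\hat{g}_{\theta}^{-1})$ law in $u$ and the reflected exponential on $t\leq 0$, so that $B_{1}=C_{1}$ and $B_{2}=C_{2}-\Exp[C_{2}]$ — which is exactly the paper's computation of the denominator correction $K_{n}$, phrased as centering. Your closing remarks on controlling the remainder and the posterior tails go beyond what the paper itself makes explicit (its proof is a formal term-by-term expansion), but they do not change the argument.
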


For a matrix $ A=\paren{ a_{ij} }\in \bR^{m_{1}\times m_{2} }\; \paren{m_{1},m_{2}\in \bN  }$, the \texttt{vec} operator is a linear map from $ \bR^{m_{1}\times m_{2}} \to \bR^{m_{1}m_{2}} $ that stacks the columns of $ A $ into a single column vector:
\begin{align}
  \mathrm{vec} A = \paren{ a_{11},\ldots,a_{m_{1}1},\ldots,a_{1m_{2}},\ldots,a_{m_{1}m_{2}} }^{\top}.
\end{align}
The proof of Lemma \ref{lem:asymptotic_expansion_posterior_density} is given in Appendix \ref{sec:appendix_pf_asymptotic_expansion_posterior_density}.

This lemma provides the asymptotic expansion of the posterior probability $ P_{\pi}^{n} $ and the frequentist probability $ P_{\theta,\gamma}^{n} $, using a shrinkage argument.
Then, we derive the conditions for the probability matching prior on an oTF as follows.

\begin{theorem}\label{thm:probability_matching_prior_oTF}
  The probability matching prior $ \pmp[\gamma](\theta,\gamma) $ for the non-regular parameter $ \gamma $ is the solution of the partial differential equation
  \begin{align}
    \pd_{\gamma}\log \pi + A^{(1,1)}_{i} g^{ij} \pd_{j} \log \pi
     & = \pd_{\gamma}\log c - \pd_{i}A_{j}^{(1,1)}g^{ij} \\
     & \quad  + A^{(1,1)}_{i}g^{ij} \braces{ \pd_{j}\log c + \pd_{j}\log \paren{  \det g_{\theta} } - \paren{ \Gamma^{g}_{mk,j} - \Gamma^{g}_{kj,m} }g^{km} }.
    \label{eq:condition_ProbabilityMatchingPrior_gamma_oTF}
  \end{align}
  On the other hand, the probability matching prior $ \pmp[i](\theta,\gamma) $ for the regular parameter $ \theta^{i}\; \paren{ i=\Ilist } $ is the solution of the partial differential equation
  \begin{align}
    \frac{g^{ij}}{\sqrt{g^{ii}}}\pd_{j} \log\pi(\theta,\gamma) =-\pd_{j}\paren{ \frac{g^{ij}}{\sqrt{g^{ii}}} }.
    \label{eq:condition_ProbabilityMatchingPrior_theta_oTF}
  \end{align}
\end{theorem}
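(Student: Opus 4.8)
The plan is to obtain both partial differential equations from the single posterior expansion in Lemma~\ref{lem:asymptotic_expansion_posterior_density} by integrating \eqref{eq:asymptotic_expansion_posterior_density} down to the relevant marginal posterior distribution functions, then to produce the matching frequentist distributions through the shrinkage argument, and finally to equate the two to the orders $O(1/n^{2})$ and $O(1/n)$ demanded by the definitions of $\pmp[\gamma]$ and $\pmp[i]$. The two pivots $T=n\hat c\,(\gamma-\hat\gamma_{\ML})$ and $U^{i}=\sqrt n\,(\theta^{i}-\hat\theta_{\ML}^{i})$ carry the different convergence rates $n^{-1}$ and $n^{-1/2}$, so the order bookkeeping has to be run separately in the non-regular and regular directions. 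The integrations use only the leading law, under which $u\sim N(0,\hat g_{\theta}^{-1})$ and $t$ has density $\e^{t}$ on its support; the moments $\Exp[t]=-1,\ \Exp[t^{2}]=2,\ \Exp[u^{\otimes2}]=\mathrm{vec}\,\hat g_{\theta}^{-1}$ and $\Exp[u^{\otimes4}]=3\,\mathrm{vec}\,(\hat g_{\theta}^{-1})^{\otimes2}$ are exactly the centering constants already displayed inside $B_{1}$ and $B_{2}$.

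For the non-regular parameter I would marginalize over $u$. Every term of $B_{1}$ is odd in $u$, so its Gaussian integral vanishes and the $n^{-1/2}$ contribution to the marginal law of $t$ is zero; this is the structural reason the matching for $\gamma$ reaches the higher order $O(1/n^{2})$. At order $1/n$ the centered blocks such as $u^{\otimes2}-\mathrm{vec}\,\hat g_{\theta}^{-1}$ and $u^{\otimes4}-3\,\mathrm{vec}\,(\hat g_{\theta}^{-1})^{\otimes2}$ integrate to zero, while the surviving pieces are the prior-dependent terms $\hat c^{-1}\hat\pi^{-1}\pd_{\gamma}\hat\pi\,(t+1)$ and $\hat c^{-1}\hat\pi^{-1}(D_{\theta}^{\top}\hat\pi\otimes\hat A^{(1,1)\top})\,\mathrm{vec}\,\hat g_{\theta}^{-1}\,(t+1)$, together with the prior-free model terms built from $\hat A^{(0,2)}$, $\hat A^{(2,1)}$, $(\hat A^{(1,1)})^{\otimes2}$ and $\hat A^{(1,1)}\otimes\hat A^{(3,0)}$ contracted with the Gaussian moments. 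Integrating the resulting $\e^{t}[1+n^{-1}(\cdots)]$ gives the marginal posterior distribution function of $T$, and the frequentist distribution of $T$ is produced by the shrinkage argument: the posterior coverage is expanded through $\hat\theta_{\ML},\hat\gamma_{\ML}$ and the sample averages $\hat A^{(r,s)}$, the stochastic expansions $\hat A^{(r,s)}=A^{(r,s)}+O_{p}(n^{-1/2})$ and $\hat\gamma_{\ML}-\gamma=O_{p}(n^{-1})$ are substituted, and integration by parts in $(\theta,\gamma)$ transfers derivatives onto the model quantities. Isolating the order-$1/n$ coefficient makes the prior-dependent pieces reproduce the left side $\pd_{\gamma}\log\pi+A^{(1,1)}_{i}g^{ij}\pd_{j}\log\pi$ of \eqref{eq:condition_ProbabilityMatchingPrior_gamma_oTF}, while the remaining terms — those produced by the shrinkage integration by parts acting on the model quantities, by the Jacobian of the pivot, and by the posterior normalizer — once rewritten through $g_{ij}=-A^{(2,0)}_{ij}$, $c=A^{(0,1)}=-\pd_{\gamma}\psi$, $\pd_{j}\log\det g_{\theta}=g^{km}\pd_{j}g_{km}$ and the Levi--Civita symbols $\Gamma^{g}_{mk,j}$, assemble into its right side.

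For a regular parameter $\theta^{i}$ the scheme is the first-order one. I would marginalize over $t$ (using $\Exp[t]=-1$) and over the components $u^{j}$ with $j\neq i$, keeping the half-line integration in $u^{i}$. Here the odd-in-$u$ terms of $B_{1}$ no longer all drop out: the cubic $\tfrac{1}{3!}\hat A^{(3,0)\top}u^{\otimes3}$ and the coupling $\hat A^{(1,1)\top}u\,t/\hat c$ survive as prior-free corrections to the marginal normal law of $U^{i}/\sqrt{\hat g^{ii}}$ through the correlations in $\hat g_{\theta}^{-1}$, and $\hat\pi^{-1}D_{\theta}^{\top}\hat\pi\,u$ survives as the single prior-dependent correction. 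Matching to the frequentist distribution of the same pivot to $O(1/n)$ forces the $n^{-1/2}$ coefficients to agree; the prior-free model terms cancel against their frequentist counterparts, leaving only the prior-dependent requirement, which after writing $D_{\theta}^{\top}\hat\pi/\hat\pi=D_{\theta}^{\top}\log\pi$ at the MLE and rescaling by $1/\sqrt{g^{ii}}$ becomes the divergence-type identity \eqref{eq:condition_ProbabilityMatchingPrior_theta_oTF}. This is the specialization to the oTF of the classical Tibshirani--Peers--Ghosh condition for one interest parameter in the presence of nuisance parameters.

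I expect the principal obstacle to lie in two places. The first is the shrinkage computation of the frequentist distributions: because $\hat\gamma_{\ML}$ converges at rate $n^{-1}$ while $\hat\theta_{\ML}$ converges at rate $n^{-1/2}$, the usual Edgeworth-type machinery must be adapted so that the two pivots are expanded consistently, the joint centering constants coincide with those inside $B_{1},B_{2}$, and the cross terms between $u$ and $t$ are tracked correctly through the marginalizations; the posterior-normalizer correction, which differs from $1$ at order $1/n$, must also be carried since it shifts the coefficients of the marginal distribution functions. The second is the translation of the raw $A^{(r,s)}$ relation into geometric form: one must express $\pd_{i}A^{(r,s)}$ and $\pd_{\gamma}A^{(r,s)}$ through $g$ and the connection coefficients, recognize the asymmetric combination $(\Gamma^{g}_{mk,j}-\Gamma^{g}_{kj,m})g^{km}$ as the remainder left once the $\det g_{\theta}$ derivative is extracted, and verify that these terms assemble exactly as written in \eqref{eq:condition_ProbabilityMatchingPrior_gamma_oTF}.
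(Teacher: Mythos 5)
Your proposal follows essentially the same route as the paper's proof: marginalize the Lemma~\ref{lem:asymptotic_expansion_posterior_density} expansion over $u$ (where the oddness of $B_{1}$ kills the $n^{-1/2}$ term) respectively over $t$ and the nuisance components of $u$, obtain the frequentist probabilities by the shrinkage argument with integration by parts, equate the expansions, and convert the raw $A^{(r,s)}$ condition into geometric form via $\pd_{j}\log\det g_{\theta}=g^{km}\pd_{j}g_{km}$ and the Christoffel-symbol identity for $\pd_{i}g^{ij}$. The roadmap, the order bookkeeping for the two pivots, and the identified technical obstacles all match the paper's argument.
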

The proof is given in Appendix \ref{sec:appendix_pf_probability_matching_prior_oTF}. We can find the solution in the case of the oTEF in Section \ref{sec:Lie_derivative}.

\setcounter{example}{0}
\begin{example}[Truncated exponential distributions (continued)]
  Consider the family of truncated exponential distributions with the density \eqref{eq:truncated_exponential_density}.
  In this case, the Riemannian metric $ g $ is given by
  \begin{align}
    g(\theta,\gamma)
     & = 
    \begin{pmatrix}
      g_{11} & g_{1\gamma} \\
      g_{\gamma 1} & g_{\gamma\gamma}
    \end{pmatrix}, \\
     & = \begin{pmatrix}
      \frac{ 1 }{ \theta^{2} } & 0 \\
      0 & \theta^{2}
    \end{pmatrix}.
  \end{align}
  Note that here $ \theta^{2} $ means the square of $ \theta $, not its second component.
  The $ \alpha $-connection for the regular parameter $ \theta $ is given by
  \begin{align}
    \aChris{111} & = -\frac{1-\alpha }{\theta^3}
  \end{align}
  for $ \alpha \in \bR $. We also have
  \begin{align}
    A^{(1,1)}(\theta,\gamma) & = 1, & c(\theta,\gamma) & = \theta.
  \end{align}

  Thus, the condition for $ \pmp[\gamma](\theta,\gamma) $ from \eqref{eq:condition_ProbabilityMatchingPrior_gamma_oTF} becomes
  \begin{align}
    \pd_{\gamma}\log \pi + \theta^{2} \pd_{\theta}\log \pi = - \theta.
  \end{align}
  The condition for $ \pmp[\theta](\theta,\gamma) $ from \eqref{eq:condition_ProbabilityMatchingPrior_theta_oTF} becomes
  \begin{align}
    \pd_{\theta}\log \pi = -\frac{1}{\theta}.
  \end{align}
\end{example}

\begin{example}[Truncated normal distributions (continued)]
  Consider the family of truncated normal distributions with the density \eqref{eq:truncated_normal_density_natural}, using the natural parameter $ \theta =\paren{ \alpha,\beta }, \gamma $.
  Let $ \Psi(v) \coloneqq \log(1-\Phi(v)) $ and let $ \Psi^{(r)} $ denote the $ r $-th derivative of $ \Psi(v) $ for $ v \in \bR $:
  \begin{align}
    \Psi^{(1)}(v) & = -\frac{ \phi(v) }{ 1-\Phi(v) }, \\
    \Psi^{(2)}(v) & = -\frac{ \phi'(v) }{ 1-\Phi(v) } - \braces{ \frac{ \phi(v) }{ 1-\Phi(v) } }^{2}, \\
    \Psi^{(3)}(v) & = -\frac{ \phi''(v) }{ 1-\Phi(v) } - 3\frac{ \phi'(v)\phi(v) }{\braces{ 1-\Phi(v) }^{2} } - 2\braces{ \frac{ \phi(v) }{ 1-\Phi(v) } }^{3}.
  \end{align}

  The Riemannian metric $ g $ is given by
  \begin{align}
    g_{11} & = -\frac{ 1 }{ 2\beta } + \paren{ \pd_{\alpha}\nu }^{2} \Psi^{(2)}(\nu), \\
    g_{12} & = \frac{ \alpha }{ 2\beta^{2} } + \paren{ \pd_{\alpha}\pd_{\beta}\nu }\Psi^{(1)}(\nu) + \paren{ \pd_{\alpha}\nu }\paren{ \pd_{\beta}\nu }\Psi^{(2)}(\nu), \\
    g_{22} & = \frac{ 1 }{ 2\beta^{2} } - \frac{ \alpha^{2} }{ 2\beta^{3} } + \paren{ \pd_{\beta}\pd_{\beta}\nu }\Psi^{(1)}(\nu) + \paren{ \pd_{\beta}\nu }^{2}\Psi^{(2)}(\nu), \\
    g_{1\gamma} & = g_{2\gamma} = 0, \\
    g_{\gamma\gamma} & = \paren{ \Psi^{(1)}(\nu) }^{2}(\pd_{\gamma}\nu)^{2},
  \end{align}
  where
  \begin{align}
    \pd_{\alpha}\nu & = -\frac{1}{\sqrt{-2\beta}}, &
    \pd_{\beta}\nu & = -\frac{ \gamma }{ \sqrt{-2\beta} } - \frac{ \alpha }{ \sqrt{-2\beta}^{3} }, &
    \pd_{\gamma}\nu & = \sqrt{-2\beta}, \\
    \pd_{\alpha}\pd_{\beta}\nu & = - \frac{ 1 }{ \sqrt{-2\beta}^{3} }, &
    \pd_{\beta}\pd_{\beta}\nu & = -\frac{ \gamma }{ \sqrt{-2\beta}^{3} } - \frac{3 \alpha }{ \sqrt{-2\beta}^{5} }.
  \end{align}

  We also obtain
  \begin{align}
    A^{(1,1)}_{1}(\alpha,\beta,\gamma) & = -\paren{ \pd_{\alpha}\nu }\paren{ \pd_{\gamma}\nu }\Psi^{(2)}(\nu), \\
    A^{(1,1)}_{2}(\alpha,\beta,\gamma) & = -\paren{ \pd_{\beta}\pd_{\gamma} \nu}\Psi^{(1)}(\nu) - \paren{ \pd_{\beta}\nu }\paren{ \pd_{\gamma}\nu }\Psi^{(2)}(\nu), \\
    c(\alpha,\beta,\gamma) & = -\Psi^{(1)}(\nu) \paren{ \pd_{\gamma}\nu }.
  \end{align}

  The conditions for the probability matching priors $ \pmp[\gamma],\pmp[1],\pmp[2] $ in \eqref{eq:condition_ProbabilityMatchingPrior_gamma_oTF} and \eqref{eq:condition_ProbabilityMatchingPrior_theta_oTF} can be computed from the above equations.
\end{example}

\section{Moment matching priors for non-regular models with regular multiparameters}\label{sec:moment_matching_priors}
	This section provides the moment matching priors on a one-sided truncated family.
Before introducing our main results, we briefly review moment matching priors.

Moment matching priors are prior distributions that asymptotically match the Bayesian posterior mean and the maximum likelihood estimator.
\cite{ghosh2011MomentMatchingPriors} proposed this idea for regular statistical models.
In regular models, both the Bayes posterior mean and the MLE are asymptotically normal with order $1/\sqrt{n}$.
A first-order moment matching prior eliminates the $1/n$ discrepancy between these two estimators.

Note that moment matching priors are not generally invariant under parameter transformations \citep{ghosh2011MomentMatchingPriors}.
For example, in exponential families, the moment matching prior for the natural parameters is the Jeffreys prior, while that for the expectation parameters is not.

We derive conditions for moment matching priors in two cases:
(i) when the non-regular parameter $ \gamma $ is of interest;
(ii) when the regular parameter $ \theta^{j} $ is of interest for $ j=1,\ldots,d $.

Let $ \hat{\theta}_{\pi} $ and $ \hat{\gamma}_{\pi} $ denote the posterior means of $ \theta $ and $ \gamma $ under a prior $ \pi $, respectively.
In case (i), the moment matching prior $ \mmp[\gamma] $ is defined as one satisfying
\begin{align}
  \hat{\gamma}_{\pi} - \cML^{*} = \bigOp{n^{-3}},
\end{align}
where $\cML^{*} = \cML - \frac{1}{n\hat{c}}$ is the bias-adjusted MLE~\citep{hashimoto2019MomentMatchingPriors}.
In case (ii), the moment matching prior $ \mmp[j] $ is defined by
\begin{align}
  \hat{\theta}_{\pi}^{j} - \qML^{j} = \bigOp{n^{-3/2}}
\end{align}
for $j = 1, \ldots, d$ \citep{ghosh2011MomentMatchingPriors}.

\cite{hashimoto2019MomentMatchingPriors} derived these priors in a setting with a one-dimensional regular parameter and a non-regular parameter.
Our results extend this to $d$-dimensional regular parameters, restricted to the oTF setting.
The proofs of the following result is given in Appendix \ref{sec:appendix_pf_moment_matching_prior_oTF}.

The asymptotic expansion of the posterior density in Lemma~\ref{lem:asymptotic_expansion_posterior_density} yields the following conditions.

\begin{theorem}\label{thm:moment_matching_prior_oTF}
  The moment matching prior $ \mmp[\gamma](\theta,\gamma) $ for the truncation parameter $ \gamma $ is the solution to
  \begin{align}
    \pd_{\gamma}\log \pi + \frac{1}{2} A^{(2,1)}_{ij}g^{ij} - 2\pd_{\gamma}\log c
    + A^{(1,1)}_{i}g^{ij}\braces{ \pd_{j}\log \pi - 2\pd_{j}\log c + \frac{1}{2}A^{(3,0)}_{jkm}g^{km} } = 0.
    \label{eq:condtion_MomentMatchingPrior_gamma_oTF}
  \end{align}
  The moment matching prior $ \mmp[j](\theta,\gamma) $ for the regular parameter $ \theta^{i} \; (i=1,\ldots,d)$ is the solution to
  \begin{align}
    \pd_{i}\log \braces{ \frac{\pi(\theta,\gamma)}{\pi_{J}(\theta,\gamma)} }
    - \frac{1}{2} \aChris[e]{jk,i}(\theta,\gamma) g^{jk}(\theta,\gamma) = 0,
    \label{eq:condtion_MomentMatchingPrior_theta_oTF}
  \end{align}
  for $ i = 1,\ldots,d $, where $ \pi_{J}(\theta,\gamma) = \sqrt{\det g(\theta,\gamma)} $.
\end{theorem}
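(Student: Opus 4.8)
The plan is to obtain both conditions directly from the posterior expansion of Lemma~\ref{lem:asymptotic_expansion_posterior_density} by computing the posterior means of $\theta^{i}$ and $\gamma$ and comparing them with $\qML$ and $\cML^{*}$. In the coordinates $u=\sqrt n(\theta-\qML)$, $t=n\hat c(\gamma-\cML)$ the posterior means become $\hat\theta_{\pi}^{j}-\qML^{j}=n^{-1/2}\Exp_{\pi}[u^{j}\mid\sample]$ and $\hat\gamma_{\pi}-\cML=(n\hat c)^{-1}\Exp_{\pi}[t\mid\sample]$. The leading factor $\e^{t-u^{\top}\hat g_{\theta}u/2}$ in \eqref{eq:asymptotic_expansion_posterior_density} makes $u$ and $t$ independent under the normalized leading-order measure, with $u\sim N(0,\hat g_{\theta}^{-1})$ and $t$ equal in law to the reflection of $\mathrm{Exp}(1)$ onto $(-\infty,0]$. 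Writing $\langle\cdot\rangle$ for expectation under this measure, I will repeatedly use $\langle u^{i}\rangle=0$, $\langle u^{i}u^{j}\rangle=\hat g^{ij}$, $\langle t\rangle=-1$, $\langle t^{2}\rangle=2$, $\langle(t+1)t\rangle=1$, $\langle(t+1)t^{2}\rangle=-4$, together with Isserlis' theorem for the higher Gaussian moments of $u$.

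For the regular parameter I would first expand $\Exp_{\pi}[u^{j}\mid\sample]=n^{-1/2}\langle u^{j}B_{1}\rangle+\bigOp{n^{-1}}$, using $\langle u^{j}\rangle=0$ and $\langle B_{1}\rangle=0$ (the latter because every summand of $B_{1}$ is odd in $u$). Hence $\hat\theta_{\pi}^{j}-\qML^{j}=\bigOp{n^{-3/2}}$ reduces to $\langle u^{j}B_{1}\rangle=0$. Contracting the three summands of $B_{1}$ against $u^{j}$ with the Gaussian moments, and then dropping the hats (legitimate because $(\qML,\cML)$ ranges over all of $\Theta\times I$ as $\sample$ varies), gives
\begin{align*}
  g^{ij}\Bigl(\pd_{j}\log\pi-\tfrac{1}{c}A^{(1,1)}_{j}+\tfrac12 A^{(3,0)}_{jkm}g^{km}\Bigr)=0,
\end{align*}
equivalently the vanishing of the bracket. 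To convert this into \eqref{eq:condtion_MomentMatchingPrior_theta_oTF} I would invoke two facts. Since the support depends only on $\gamma$, a $\theta$-derivative commutes with the expectation, which yields the regular identity $A^{(3,0)}_{jkm}=-\pd_{j}g_{km}-\aChris[e]{km,j}$, so that $\tfrac12 A^{(3,0)}_{jkm}g^{km}=-\pd_{j}\log\sqrt{\det g_{\theta}}-\tfrac12\aChris[e]{km,j}g^{km}$; and $A^{(1,1)}_{j}=\pd_{j}c$ gives $\tfrac1c A^{(1,1)}_{j}=\pd_{j}\log c=\tfrac12\pd_{j}\log g_{\gamma\gamma}$ because $g_{\gamma\gamma}=c^{2}$. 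Combining these with $\det g=g_{\gamma\gamma}\det g_{\theta}$ collapses the bracket to $\pd_{j}\log(\pi/\pi_{J})-\tfrac12\aChris[e]{km,j}g^{km}$, which is \eqref{eq:condtion_MomentMatchingPrior_theta_oTF}.

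For the truncation parameter the same parity observation is decisive: $B_{1}$ is odd in $u$, so $\langle(t+1)B_{1}\rangle=0$ and $\langle B_{1}\rangle=0$, whence $\Exp_{\pi}[t\mid\sample]+1=n^{-1}\langle(t+1)B_{2}\rangle+\bigOp{n^{-3/2}}$; because the successive odd-order corrections are likewise annihilated by the weight $t+1$, matching to the order $\bigOp{n^{-3}}$ required in the definition of $\mmp[\gamma]$ amounts to $\langle(t+1)B_{2}\rangle=0$. I would evaluate this by retaining only the summands of $B_{2}$ that are even in $u$ and carry a factor $t$ or $t^{2}$ (the remaining summands die either by an odd Gaussian moment or by $\langle t+1\rangle=0$): the $\pd_{\gamma}\hat\pi$ term supplies $\pd_{\gamma}\log\pi$, the cross term in $\pd_{k}\hat\pi$ and $\hat A^{(1,1)}$ supplies $A^{(1,1)}_{i}g^{ij}\pd_{j}\log\pi$, and the $\hat A^{(0,2)}$, $\hat A^{(2,1)}$, $(\hat A^{(1,1)})^{\otimes2}$ and $\hat A^{(1,1)}\otimes\hat A^{(3,0)}$ terms supply the remaining pieces through $\langle(t+1)t\rangle=1$, $\langle(t+1)t^{2}\rangle=-4$ and Isserlis' theorem. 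Finally I would simplify with the non-regular identities $A^{(1,1)}_{i}=\pd_{i}c$, $A^{(0,2)}=\pd_{\gamma}c$ and $A^{(2,1)}_{ij}=\pd_{i}\pd_{j}c$ --- all valid because $\pd_{\gamma}\log p=-\pd_{\gamma}\psi$ is nonrandom on the interior, so its further derivatives are deterministic --- together with $\pd_{\gamma}\log c=A^{(0,2)}/c$ and $\pd_{j}\log c=A^{(1,1)}_{j}/c$, recasting the identity as \eqref{eq:condtion_MomentMatchingPrior_gamma_oTF}.

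The main obstacle I anticipate is the bookkeeping inside $\langle(t+1)B_{2}\rangle$: $B_{2}$ carries degree-four and degree-six polynomials in $u$ (the latter weighted by the symmetrizer $S_{6}$), and tracking the combinatorial multiplicities of the Isserlis pairings --- using the index symmetry of $A^{(3,0)}$ and $A^{(4,0)}$ --- while simultaneously attaching the correct exponential moment of $t$ to each monomial is where sign and factor errors are most likely. The second delicate point is applying the right commutation rule to the right term: $\theta$-derivatives pass through the expectation, whereas $\gamma$-derivatives meet the moving boundary, so the regular identity for $A^{(3,0)}$ and the deterministic identities for $A^{(1,1)},A^{(0,2)},A^{(2,1)}$ must be assigned to exactly the matching summands. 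Once these are in place, each matching requirement collapses to the stated partial differential equation.
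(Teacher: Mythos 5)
Your proposal is correct and follows essentially the same route as the paper: both start from the posterior expansion of Lemma~\ref{lem:asymptotic_expansion_posterior_density}, match the posterior means of $\theta$ and $\gamma$ against $\qML$ and the bias-corrected $\cML^{*}$ so that the leading discrepancy (the limit in probability of $n(\hat\theta_{\pi}-\qML)$, respectively $n^{2}(\hat\gamma_{\pi}-\cML^{*})$) must vanish, and then rewrite the resulting identities via $A^{(1,1)}_{i}=\pd_{i}c$, $A^{(0,2)}=\pd_{\gamma}c$ and $A^{(3,0)}_{jkm}=-\pd_{j}g_{km}-\aChris[e]{km,j}$. The only difference is organizational --- the paper first forms the marginal posterior densities of $t$ and $u$ and then integrates, whereas you contract $\langle u^{j}B_{1}\rangle$ and $\langle (t+1)B_{2}\rangle$ directly on the joint expansion (equivalent since $\langle B_{2}\rangle=0$ term by term), which is the same computation in a different order.
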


The partial differential equation for $ \mmp[i](\theta,\gamma) $ resembles that in regular models $ \Set{ P_{\theta}:\theta \in \Theta } $,
where the moment matching prior satisfies
\begin{align}
  \pd_{i}\log\braces{ \frac{ \pi(\theta) }{ \pi_{J}(\theta) } }
  - \frac{1}{2} \aChris[e]{jk,i}(\theta)g^{jk}(\theta) = 0,
\end{align}
with $ \pi_{J}(\theta) $ being the Jeffreys prior \citep{tanaka2023GeometricPropertiesNoninformative}.

\setcounter{example}{0}

\begin{example}[Truncated exponential distributions (continued)]
  Consider the family of truncated exponential distributions with the density \eqref{eq:truncated_exponential_density}.
  In this case, we have
  \begin{align}
    A^{(2,1)}(\theta,\gamma)       & = 0, &
    A^{(3,0)}(\theta,\gamma)       & = \frac{ 2 }{ \theta^{3} }, \\
    \aChris[e]{11,1}(\theta,\gamma) & = 0, &
    \pi_{J}(\theta,\gamma)         & = 1.
  \end{align}
  The values of $ A^{(1,1)}(\theta,\gamma), c(\theta,\gamma), g(\theta,\gamma) $ are given in Section \ref{sec:probability_matching_priors}.

  Thus, the condition for the moment matching prior $ \mmp[\gamma] $ \eqref{eq:condtion_MomentMatchingPrior_gamma_oTF} is given by
  \begin{align}
    \pd_{\gamma}\log \pi(\theta,\gamma) + \theta^{2} \pd_{\theta} \log \pi(\theta,\gamma) -\theta = 0.
  \end{align}
  The condition for the moment matching prior $ \mmp[\theta] $ \eqref{eq:condtion_MomentMatchingPrior_theta_oTF} is also given by
  \begin{align}
    \pd_{\theta}\log\braces{  \pi(\theta,\gamma)}=0.
  \end{align}
\end{example}

\begin{example}[Truncated normal distributions (continued)]
  Consider the family of truncated normal distributions with the density \eqref{eq:truncated_normal_density_natural} with the natural parameter $ \theta =\paren{ \alpha,\beta },\gamma $.
  The components of $ A^{(3,0)} $ are given by
  \begin{align}
    A^{(3,0)}_{111}(\theta,\gamma) & = - \paren{ \pd_{\alpha} \nu}^{3} \Psi^{(3)} (\nu), \\
    A^{(3,0)}_{112}(\theta,\gamma) & = -\frac{ 1 }{ 2\beta^{2} }- \paren{ \pd_{\alpha} \nu}^{2} \paren{\pd_{\beta}\nu} \Psi^{(3)} (\nu) - 2 \paren{ \pd_{\alpha}\pd_{\beta}\nu }\paren{ \pd_{\alpha}\nu }\Psi^{(2)}(\nu), \\
    A^{(3,0)}_{122}(\theta,\gamma) & = \frac{ \alpha }{ \beta^{3} } - \paren{ \pd_{\alpha} \nu}\paren{ \pd_{\beta}\nu }^{2} \Psi^{(3)} (\nu) - \braces{ 2\paren{ \pd_{\alpha}\pd_{\beta}\nu }\paren{ \pd_{\beta}\nu } + \paren{ \pd_{\beta}\pd_{\beta}\nu }\paren{ \pd_{\alpha}\nu } }\Psi^{(2)}(\nu) \nonumber \\
                                   & \quad - \paren{ \pd_{\alpha}\pd_{\beta}\pd_{\beta}\nu }\Psi^{(1)}(\nu), \\
    A^{(3,0)}_{222}(\theta,\gamma) & = \frac{ 1 }{ \beta^{3} } - \frac{ 3\alpha^{2} }{ 2\beta^{4} } - \paren{ \pd_{\beta}\nu }^{3} \Psi^{(3)} (\nu) - 3\paren{ \pd_{\beta}\pd_{\beta}\nu }\paren{ \pd_{\beta}\nu }\Psi^{(2)}(\nu) - \paren{ \pd_{\beta}\pd_{\beta}\pd_{\beta}\nu }\Psi^{(1)}(\nu).
  \end{align}
  The components of $ A^{(2,1)} $ are given by
  \begin{align}
    A^{(2,1)}_{11}(\theta,\gamma) & = - \paren{ \pd_{\alpha} \nu}^{2} \paren{\pd_{\gamma}\nu} \Psi^{(3)} (\nu), \\
    A^{(2,1)}_{12}(\theta,\gamma) & = - \paren{ \pd_{\alpha}\nu }\paren{ \pd_{\beta}\nu }\paren{ \pd_{\gamma}\nu } \Psi^{(3)}(\nu) - \braces{ \paren{ \pd_{\alpha}\pd_{\beta}\nu } \paren{ \pd_{\gamma}\nu } + \paren{ \pd_{\alpha}\nu } \paren{ \pd_{\beta}\pd_{\gamma}\nu } }\Psi^{(2)}(\nu), \\
    A^{(2,1)}_{22}(\theta,\gamma) & = - \paren{ \pd_{\beta}\nu }^{2}\paren{ \pd_{\gamma}\nu } \Psi^{(3)}(\nu) - \braces{ \paren{ \pd_{\beta}\pd_{\beta}\nu }\paren{ \pd_{\gamma}\nu } + 2 \paren{ \pd_{\beta} \pd_{\gamma} \nu} \paren{ \pd_{\beta}\nu } } \Psi^{(2)} (\nu) \nonumber \\
                                  & \quad - \paren{ \pd_{\beta}\pd_{\beta}\pd_{\gamma}\nu }\Psi^{(1)}(\nu).
  \end{align}
  Note that 
  \begin{align}
    \pd_{\alpha}\pd_{\beta}\pd_{\beta} \nu & = - \frac{ 3 }{ \sqrt{-2\beta}^{5} }, \\
    \pd_{\beta}\pd_{\beta}\pd_{\beta} \nu & = - \frac{ 3\gamma }{ \sqrt{-2\beta}^{5} } - \frac{ 15\alpha }{ \sqrt{-2\beta}^{7} }, \\
    \pd_{\beta}\pd_{\beta}\pd_{\gamma} \nu & = - \frac{ 1 }{ \sqrt{-2\beta}^{3} }.
  \end{align}
  We obtain the condition for the moment matching prior $ \mmp[\gamma] $ in \eqref{eq:condtion_MomentMatchingPrior_gamma_oTF} by the above equations.

  We also obtain the conditions for the moment matching priors $ \mmp[1],\mmp[2] $ in \eqref{eq:condtion_MomentMatchingPrior_theta_oTF} by
  \begin{align}
    \pi_{J}(\theta,\gamma)         & =  \sqrt{g_{11}g_{22}-g_{12}g_{12}}\braces{ -\paren{ \pd_{\gamma}\nu }\Psi^{(1)}(\nu) }, \\
    \aChris[e]{ij,k}(\theta,\gamma) & = 0
  \end{align}
  for $ i,j,k=1,2 $.
\end{example}

\section{The Lie derivative shared by probability and moment matching priors}\label{sec:Lie_derivative}
	
In this section, we derive the relationship between the two matching priors on the oTEF.
On the oTEF $ \model_{e} $, the conditions for matching priors have the simple form.
Then, we obtain a common point of the two matching priors for the truncation parameter $ \gamma $ that the Lie derivative appears in the conditions.

Let $ X_{1},\ldots, X_{n} $ be i.i.d. random samples from a distribution $ p(x;\theta,\gamma) $ in an oTEF $ \model_{e} $.
We use the same assumptions and symbols as in the previous sections without the statistical model.
Since the density function of the oTEF has the form \eqref{eq:oTEF-density}, it follows that
\begin{align}
  D_{\theta}A^{(r,0)}(\theta,\gamma) & = A^{(r+1,0)}(\theta,\gamma) = -D_{\theta}^{\otimes r+1} \psi(\theta,\gamma) \quad \paren{ r=0,1,2,\ldots }. 
\end{align}
This property simplifies the conditions for the matching priors.

Consider the vector field $ \chi \coloneqq \pd_{\gamma} + A_{i}^{(1,1)} g^{ij}\pd_{j }$ on $ \model_{e} $.
Let $ \mathcal{L}_{\chi} $ be a Lie derivative along $ \chi $.
Then, the conditions \eqref{eq:condition_ProbabilityMatchingPrior_gamma_oTF} for the probability matching prior $ \pmp[\gamma] $ on the oTEF
have the form
\begin{align}
  \mathcal{L}_{\chi} \braces{ \log \pi -\log \paren{ \det g_{\theta} } - \frac{ 1 }{ 2 }\log g_{\gamma\gamma}} 
  & = 0. \label{eq:condition_pmp_gamma_oTEF_LieD}
\end{align}
On the other hands, the conditions \eqref{eq:condtion_MomentMatchingPrior_gamma_oTF} for the moment matching priors $ \mmp[\gamma] $ on the oTEF
have the form
\begin{align}
  \mathcal{L}_{\chi} \braces{ \log \pi -\frac{ 1 }{ 2 }\log \paren{ \det g_{\theta} } -  \log g_{\gamma\gamma}} 
  & = 0. \label{eq:condition_mmp_gamma_oTEF_LieD}
\end{align}
Thus, the two matching priors have a common point that the Lie derivative $ \mathcal{L}_{\chi} $ appears in the partial differential equations.

The vector field $ \chi $ is regarded as the natural vector field on the other coordinate.
Consider the reparametrization $ (\theta,\gamma) \mapsto (\eta,\gamma')$ with $ \eta = D_{\theta}\psi(\theta,\gamma) $ and $ \gamma'=\gamma $. 
We call $ \eta $ \textit{expectation parameters} since it follows that
$ \eta_{i}= \Exp[F_{i}(X)]$ for $ i=\Ilist $ on the oTEF.
In this case, the parameter space is the set $ H\coloneqq \Set{(\eta(\theta,\gamma),\gamma):\theta\in \Theta,\gamma\in I} $.
Then, the natural vector fields of $ (\eta,\gamma) $ are written as
\begin{align}
  \frac{ \pd }{ \pd \eta_{i} }  & =  g^{ij}\frac{ \pd }{ \pd \theta^{i} },   &\frac{ \pd }{ \pd \gamma' }    & = \chi.
\end{align}

The Lie derivative $ \mathcal{L}_{\chi} $ gives the characterization of the two types of matching priors on the following submodel.
Let $     H'  \coloneqq \Set{\eta(\theta,\gamma):\theta\in \Theta,\gamma\in I},\; I'_{\eta_{0}}  \coloneqq \Set{\gamma:(\eta_{0},\gamma)\in H}$ for $ \eta_{0}\in H' $.
We call $ \evol{\rho}{\tau} \coloneqq \paren{ \det g_{\theta} }^{(\rho + 1/2)} \paren{ g_{\gamma\gamma} }^{(\tau + 1/2)} $ an \textit{extended volume element} of the oTEF for $ \rho,\tau \in \bR $
with coordinate $ \theta,\gamma $.
\begin{theorem}
  Consider the submodel $ \model_{e,\eta_{0}}\coloneqq \Set{p(x;\eta_{0},\gamma):\gamma\in I'_{\eta_{0}}} $ for a fixed $ \eta_{0}\in H' $.
  Then, there exist a unique probability matching prior $ \pmp[\gamma] $ and a unique moment matching prior $ \mmp[\gamma] $ on $ \model_{e,\eta_{0}} $ such that
  \begin{align}
    \pmp[\gamma]\paren{ \gamma } & \propto \evol{1/2}{0}(\eta_{0},\gamma) = \braces{ \det g_{\theta}(\eta_{0},\gamma) }\sqrt{ g_{\gamma\gamma}(\eta_{0},\gamma) } , \\ 
    \mmp[\gamma]\paren{ \gamma } & \propto  \evol{0}{1/2}(\eta_{0},\gamma) = \sqrt{ \det g_{\theta}(\eta_{0},\gamma) }\;{ g_{\gamma\gamma} (\eta_{0},\gamma)} .
  \end{align}
\end{theorem}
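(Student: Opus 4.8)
The plan is to exploit the observation, recorded in the display just before the theorem, that in the expectation coordinate $(\eta,\gamma')$ the vector field $\chi$ is literally the coordinate field $\partial/\partial\gamma'$, so that $\mathcal{L}_{\chi}$ collapses to an ordinary derivative along the submodel. First I would identify the integral curves of $\chi$ with the submodels $\model_{e,\eta_0}$: fixing $\eta=\eta_0$ and letting $\gamma'=\gamma$ range over $I'_{\eta_0}$ traces out $\model_{e,\eta_0}$, and its tangent is $\partial/\partial\gamma'=\chi$. Since the Lie derivative of a scalar function $h$ along $\chi$ is just the directional derivative $\mathcal{L}_{\chi}h=\chi(h)=\partial_{\gamma'}h$ (computed with $\eta$ held fixed), each of the conditions \eqref{eq:condition_pmp_gamma_oTEF_LieD} and \eqref{eq:condition_mmp_gamma_oTEF_LieD} becomes a first-order linear homogeneous PDE whose characteristic curves are precisely these submodels.

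Next I would integrate along a characteristic. For the probability matching prior, set $h_{PM}=\log\pi-\log(\det g_\theta)-\tfrac12\log g_{\gamma\gamma}$; condition \eqref{eq:condition_pmp_gamma_oTEF_LieD} reads $\partial_{\gamma'}h_{PM}=0$, so by the method of characteristics $h_{PM}$ is independent of $\gamma'$ along the (connected) submodel and hence equals an arbitrary function $f(\eta)$ of the expectation parameters alone. Restricting to the single submodel $\model_{e,\eta_0}$ collapses $f(\eta)$ to the constant $f(\eta_0)$, giving $\log\pi=\log(\det g_\theta)+\tfrac12\log g_{\gamma\gamma}+\text{const}$, i.e. $\pi\propto(\det g_\theta)^{1}(g_{\gamma\gamma})^{1/2}$ along $\model_{e,\eta_0}$. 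Matching exponents against $\evol{\rho}{\tau}=(\det g_\theta)^{\rho+1/2}(g_{\gamma\gamma})^{\tau+1/2}$ forces $\rho=1/2,\ \tau=0$, so $\pmp[\gamma]\propto\evol{1/2}{0}(\eta_0,\gamma)$. The identical argument with $h_{MM}=\log\pi-\tfrac12\log(\det g_\theta)-\log g_{\gamma\gamma}$ from \eqref{eq:condition_mmp_gamma_oTEF_LieD} yields $\pi\propto(\det g_\theta)^{1/2}g_{\gamma\gamma}=\evol{0}{1/2}(\eta_0,\gamma)$, hence $\mmp[\gamma]\propto\evol{0}{1/2}(\eta_0,\gamma)$.

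For uniqueness I would argue that on the full model the matching-prior equation is underdetermined, its general solution carrying the free function $f(\eta)$; once $\eta_0$ is fixed only the integration constant $f(\eta_0)$ survives, and a prior is determined only up to normalization, so the induced prior on $\model_{e,\eta_0}$ is unique. I expect the one genuinely delicate point to be the reduction $\mathcal{L}_{\chi}h=\partial_{\gamma'}h$: one must confirm that $\chi$ really is the coordinate field $\partial/\partial\gamma'$ (established in the display preceding the theorem) and that $\det g_\theta$, $g_{\gamma\gamma}$, and $\log\pi$ are being treated consistently as scalar functions on $\model_{e}$ along the characteristic, so that ``constant on characteristics'' legitimately yields the stated $\gamma$-dependence. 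The remaining step, the exponent bookkeeping against the definition of $\evol{\rho}{\tau}$, is routine.
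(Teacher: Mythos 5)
Your proposal is correct and takes essentially the same approach as the paper's proof: since $\chi=\pd/\pd\gamma'$ in the $(\eta,\gamma')$ coordinates, restricting the two Lie-derivative conditions to $\model_{e,\eta_{0}}$ turns them into ordinary differential equations $\pd_{\gamma}\braces{\log\pi(\gamma)-\log \evol{1/2}{0}(\eta_{0},\gamma)}=0$ and $\pd_{\gamma}\braces{\log\pi(\gamma)-\log \evol{0}{1/2}(\eta_{0},\gamma)}=0$, whose solutions up to a multiplicative constant are exactly the stated priors. Your method-of-characteristics framing (general solution with a free function $f(\eta)$, then restriction to $\eta=\eta_{0}$) is just a more detailed rendering of the same argument, including the exponent matching against $\evol{\rho}{\tau}$.
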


\begin{proof}
  On the model $ \model_{e,\eta_{0}} $, the two conditions \eqref{eq:condition_pmp_gamma_oTEF_LieD} and \eqref{eq:condition_mmp_gamma_oTEF_LieD} are given by
  \begin{align}
    \pd_{\gamma} \braces{ \log \pi(\gamma) -\log \paren{ \evol{1/2}{0}(\eta_{0},\gamma) }} & = 0, \\
    \pd_{\gamma} \braces{ \log \pi(\gamma) -\log \paren{ \evol{0}{1/2}(\eta_{0},\gamma) }} & = 0.
  \end{align}
  Thus, $ \evol{1/2}{0}(\eta_{0},\gamma) $ and $ \evol{0}{1/2}(\eta_{0},\gamma) $ are unique matching priors, respectively.
\end{proof}

Some $ \alpha $-parallel priors on the oTEF are matching priors.
$ \alpha $-parallel priors are the extensions of the Jeffreys prior from the geometric point of view (See \cite{takeuchi2005AlphaparallelPriorIts} for details).
The priors are originally defined for regular models, but they can be extended for the oTEF with the $ \alpha $-connections in \eqref{eq:alpha-connection} \citep{yoshioka2023AlphaparallelPriorsOneSided}.
The explicit form of the $ \alpha $-parallel priors on the oTEF is given by
\begin{align*}
  \pi^{(\alpha)}(\theta,\gamma) & \propto \evol{0}{\alpha/2}(\theta,\gamma) 
\end{align*}
for $ \alpha \in \bR $. Note that $ \pi^{(0)}=\pi_{J} $.
Then, $ \pi^{(0)},\pi^{(-1)} $ and the square of $ \pi^{(1/2)} $ satisfy the conditions \eqref{eq:condtion_MomentMatchingPrior_theta_oTF}, \eqref{eq:condition_pmp_gamma_oTEF_LieD} and \eqref{eq:condition_mmp_gamma_oTEF_LieD}, respectively.
The prior $ \pi^{(0)} $ is a moment matching prior for the natural parameter $ \theta $. Also, $ \pi^{(-1)} $ is a probability matching prior for the truncation parameter $ \gamma $.
The square of $ \pi^{(1/2)} $ is a moment matching prior for the truncation parameter $ \gamma $.

\setcounter{example}{0}

\begin{example}[Truncated exponential distributions (continued)]
  Consider the family of truncated exponential distributions with the density \eqref{eq:truncated_exponential_density}.
  This family is one of the oTEFs.
  In Section \ref{sec:probability_matching_priors} and Section \ref{sec:moment_matching_priors}, we have the conditions for the two matching priors for this family.
  Then, 
  \begin{align}
    \pmp[\gamma](\theta,\gamma) & \propto \frac{ 1 }{ \theta }, & \pmp[\theta](\theta,\gamma) & \propto \frac{1}{\theta},   \\ 
    \mmp[\gamma](\theta,\gamma) & \propto  \theta,   & \mmp[\theta](\theta,\gamma) & \propto 1
  \end{align}
  are one of the solutions of the partial differential equations of the conditions, respectively.

  The vector field $ \chi $ on this family is given by
  \begin{align}
    \chi = \pd_{\gamma} +\theta^{2} \pd_{\theta}.
  \end{align}
  The streamline along $ \chi $ is shown in Figure \ref{fig:StreamPlot_exponential_theta_gamma}.

  \begin{figure}
    \centering
    \includegraphics[width=0.5\textwidth]{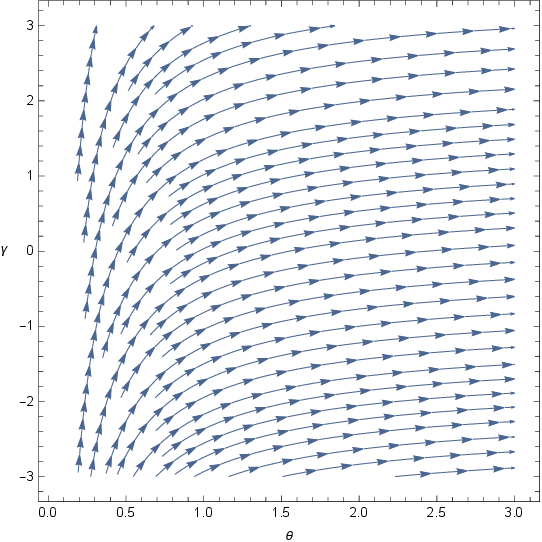}
    \caption{Streamline: $ Exp(\theta,\gamma) $}
    \label{fig:StreamPlot_exponential_theta_gamma}
  \end{figure}

  The expectation parameter is $ \eta=1/\theta+\gamma $ with the parameter space $ H=\Set{(\eta,\gamma):\eta>\gamma} $.
  Then, there exist unique probability and moment matching priors, denoted by $\pmp[\gamma]$ and $\mmp[\gamma]$, respectively, such that
  \begin{align*}
    \pmp[\gamma](\eta_0, \gamma) &\propto \frac{1}{\eta_0 - \gamma}, &
    \mmp[\gamma](\eta_0, \gamma) &\propto \eta_{0} - \gamma
  \end{align*}
  on the submodel $\Set{ p(x; \eta_0, \gamma) : \gamma< \eta_{0} }$ for fixed $\eta_0 \in \mathbb{R}$.
  Note that each streamline in Figure~\ref{fig:StreamPlot_exponential_theta_gamma} represents one such submodel in the $ (\theta,\gamma) $ coordinate.
\end{example}

\begin{example}[Truncated normal distributions (continued)]
  Consider the family of truncated normal distributions with the fixed $ \beta=-1/2 $ in the density \eqref{eq:truncated_normal_density} for simplicity.
  The density $ p(x;\alpha,-1/2,\gamma) $ represents the truncated normal distribution with $ \mu=\alpha $ and $ \sigma = 1$.

  In this case, the vector field $ \chi $ is given by
  \begin{align}
    \chi = \pd_{\gamma} + \frac{ \Psi^{(2)}(\nu) }{1 + \Psi^{(2)}(\nu) }\pd_{\alpha}.
  \end{align}
  The streamline along $ \chi $ shown in Figure \ref{fig:StreamPlot_normal_alpha_gamma}.
  \begin{figure}
    \centering
    \includegraphics[width=0.5\textwidth]{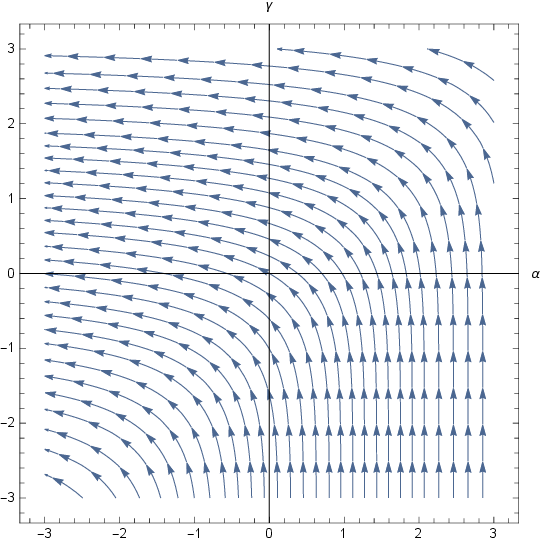}
    \caption{Streamline: $ N(\alpha,1,\gamma) $}
    \label{fig:StreamPlot_normal_alpha_gamma}
  \end{figure}

  The expectation parameter is 
    $ \eta  =  \alpha - \Psi^{(1)}(\nu)  $ with the parameter space $ H=\bR^{2} $.

  The same calculation can be performed for the full family of truncated normal distributions.
\end{example}

\section{Concluding remarks}\label{sec:conclusions}
	This paper reveals the common geometric structure of the probability and moment matching prior in multivariate non-regular models. In particular, we derive the partial differential equations characterizing the two types of matching priors within the framework of an oTF, as presented in Theorems \ref{thm:probability_matching_prior_oTF} and \ref{thm:moment_matching_prior_oTF}. These equations involve the connection coefficients, which do not appear in the univariate case ($d = 1$) \citep{ghosal1999ProbabilityMatchingPriors,hashimoto2019MomentMatchingPriors}.

These differential equations simplify for the subclass of oTF known as oTEF due to the vanishing of the e-connection coefficients when expressed in terms of the natural parameter $\theta$. Under this restriction, we can express the conditions for both matching priors in terms of the Lie derivative along a common vector field. This formulation highlights an invariance of a generalized volume element with respect to differentiation in the direction of the truncation parameter under the parameter transformation.

While the geometric formulation is made explicit in the oTEF case, extending this structure to more general non-regular models remains an open problem. Although the differential equations for both matching priors can be analogously derived, expressing their conditions by geometric terms requires further understanding of the underlying geometric properties of non-regular models.

	\backmatter

	\bmhead{Supplementary information}

	\bmhead{Acknowledgements}

	This work was supported by JSPS KAKENHI grant number JP23K11006 and JST SPRING grant number JPMJSP2138.

\section*{Declarations}



	\bmhead{Competing interests}
	The authors have no competing interests to declare that are relevant to the content of this article.

	\noindent

	\bigskip
	\begin{flushleft}%
	    Editorial Policies for:

	    \bigskip\noindent
	    Springer journals and proceedings: \url{https://www.springer.com/gp/editorial-policies}

	    \bigskip\noindent
	    Nature Portfolio journals: \url{https://www.nature.com/nature-research/editorial-policies}

	    \bigskip\noindent
	    \textit{Scientific Reports}: \url{https://www.nature.com/srep/journal-policies/editorial-policies}

	    \bigskip\noindent
	    BMC journals: \url{https://www.biomedcentral.com/getpublished/editorial-policies}
	\end{flushleft}

	\begin{appendices}




\section{Proof of Lemma \ref{lem:asymptotic_expansion_posterior_density}}
\label{sec:appendix_pf_asymptotic_expansion_posterior_density}

Let
\begin{align*}
  \tilde{\theta}_{u} & \coloneqq \hat{\theta}_{\ML}  +\frac{ u }{ \sqrt{n}}, & \tilde{\gamma}_{t} & \coloneqq \hat{\gamma}_{\ML} +\frac{ t }{ n\hat{c} }.
\end{align*}
The posterior density $ \pi(u,t;\sample) $ is given by
\begin{align}
  \pi(u,t;\sample) & = \frac{  \pi\paren{ \tilde{\theta}_{u},\tilde{\gamma}_{t} }
    \prod_{i=1}^{n}p(X_{i};\tilde{\theta}_{u},\tilde{\gamma}_{t}) }
  {  \int \pi\paren{ \tilde{\theta}_{u'},\tilde{\gamma}_{t'} }
  \prod_{i=1}^{n}p(X_{i};\tilde{\theta}_{u'},\tilde{\gamma}_{t'})du' dt' }        \\ 
                   & = \frac{  \pi\paren{ \tilde{\theta}_{u},\tilde{\gamma}_{t} }
  \exp\bracket{ \sum_{i=1}^{^{n}}\braces{ \log p (X_{i};\tilde{\theta}_{u},\tilde{\gamma}_{t})- \log p (X_{i} ; \hat{\theta}_{\ML},\hat{\gamma}_{\ML} ) } } }
  {  \int \pi\paren{ \tilde{\theta}_{u'},\tilde{\gamma}_{t'} }
  \exp\bracket{ \sum_{i=1}^{^{n}}\braces{ \log p (X_{i};\tilde{\theta}_{u'},\tilde{\gamma}_{t'})- \log p (X_{i};\hat{\theta}_{\ML},\hat{\gamma}_{\ML} ) } }du' dt' }
  \label{eq:posterior_density_oTF}
\end{align}
For the calculation of the asymptotic expansion of the posterior density,
we will calculate the asymptotic expansion of three terms in \eqref{eq:posterior_density_oTF}: $ \pi\paren{ \tilde{\theta}_{u},\tilde{\gamma}_{t} } $, the exponential term, and the denominator.

First, we derive the asymptotic expansion of the prior term $ \pi\paren{ \tilde{\theta}_{u},\tilde{\gamma}_{t} } $.
By Taylor's theorem, we have
\begin{align}
  \pi\paren{ \tilde{\theta}_{u},\tilde{\gamma}_{t} }
   & = \hat{\pi}+D_{\theta}^{\top}\hat{\pi}  \frac{ u }{ \sqrt{n}}+\pd_{\gamma}\hat{\pi}\frac{ t }{ n\hat{c} }                      \\
   & \quad+\frac{ 1 }{ 2 } \paren{   \frac{ u }{ \sqrt{n}} }^{\top} D_{\theta}D_{\theta}^{\top}\hat{\pi}  \frac{ u }{ \sqrt{n}}
    +\bigOp{ \frac{ 1 }{ n^{3/2} } }                                                                                            \\
   & = \hat{\pi} + \frac{ 1 }{ \sqrt{n} }P_{1}(u) + \frac{ 1 }{ n }P_{2}(u,t) + \bigOp{ \frac{ 1 }{ n^{3/2} } },
  \label{eq:asymptotic_expansion_prior_oTF}
\end{align}
where
\begin{align*}
  \hat{\pi}  & \coloneqq  \pi\paren{ \hat{\theta}_{\ML},\hat{\gamma}_{\ML} },                                                           \\
  P_{1}(u)   & \coloneqq D_{\theta}^{\top}\hat{\pi}  u,                                                                                 \\
  P_{2}(u,t) & \coloneqq \pd_{\gamma}\hat{\pi}\frac{ t }{ \hat{c} } + \frac{ 1 }{ 2 } u^{\top} D_{\theta}D_{\theta}^{\top}\hat{\pi}  u.
\end{align*}

Second, we derive the asymptotic expansion of the exponential term in \eqref{eq:posterior_density_oTF}.
Let $  \tilde{l}_{i}(u,t) \coloneqq \log p (X_{i};\tilde{\theta}_{u},\tilde{\gamma}_{t})$ and $ \hat{l}_{i} \coloneqq \log p (X_{i};\hat{\theta}_{\ML},\hat{\gamma}_{\ML} ) $.
By Taylor's theorem, we get the asymptotic expansion of  $ \tilde{l}_{i}(u,t) - \hat{l}_{i} $ as
\begin{align*}
  \tilde{l}_{i}(u,t)- \hat{l}_{i}
   & = \paren{ D_{\theta}\hat{l}_{i} }^{\top}\paren{ \frac{u}{ \sqrt{n} } } + \pd_{\gamma}\hat{l}_{i} \paren{ \frac{ t }{ n\hat{c} } }                                              \\
   & \quad +\frac{ 1 }{ 2 }\paren{ D_{\theta}^{\otimes{ 2 }}\hat{l}_{i} }^{\top}\paren{ \frac{u}{ \sqrt{n} }  }^{\otimes{ 2 }}  +\frac{ 1 }{ 2 }\pd_{\gamma}\pd_{\gamma}\hat{l}_{i}
  \paren{ \frac{ t }{ n\hat{c} } }^{2}                                                                                                                                              \\
   & \quad + \paren{ D_{\theta}\pd_{\gamma}\hat{l}_{i}  }^{\top}  \paren{ \frac{u}{ \sqrt{n} }  } \paren{ \frac{ t }{ n\hat{c} } }                                                  \\
   & \quad + \frac{ 1 }{ 3! }\paren{ D_{\theta}^{\otimes{ 3 }}\hat{l}_{i} }^{\top}\paren{ \frac{u}{ \sqrt{n} }  }^{\otimes{ 3 }}                                                    \\
   & \quad + \frac{3}{3!} \paren{ D_{\theta}^{\otimes{ 2 }}\pd_{\gamma}\hat{l}_{i} }^{\top}\paren{ \frac{u}{ \sqrt{n} }  }^{\otimes{ 2 }}\paren{ \frac{ t }{ n\hat{c} } }           \\
   & \quad + \frac{1}{4!} \paren{ D_{\theta}^{\otimes{ 4 }}\hat{l}_{i} }^{\top}\paren{ \frac{u}{ \sqrt{n} }  }^{\otimes{ 4 }}   +\bigOp{ \frac{ 1 }{ n^{5/2} } }                                                                                                                                         \\
   & = \frac{ 1 }{ \sqrt{n} }\paren{ D_{\theta}\hat{l}_{i} }^{\top}u + \frac{ 1 }{ n }\pd_{\gamma}\hat{l}_{i} \paren{ \frac{ t }{ \hat{c} } }                                       \\
   & \quad +\frac{ 1 }{ 2n }\paren{ D_{\theta}^{\otimes{ 2 }}\hat{l}_{i} }^{\top}u^{\otimes{ 2 }}
  + \frac{ 1 }{ n^{3/2} }\paren{ D_{\theta}\pd_{\gamma}\hat{l}_{i}  }^{\top}  u \paren{ \frac{ t }{ \hat{c} } }                                                                     \\
   & \quad + \frac{ 1 }{ 3!n^{3/2} }\paren{ D_{\theta}^{\otimes{ 3 }}\hat{l}_{i} }^{\top}u^{\otimes{ 3 }}                                                                           \\
   & \quad + \frac{ 1 }{ 2n^{2} }\pd_{\gamma}\pd_{\gamma}\hat{l}_{i} \paren{ \frac{ t }{ \hat{c} } }^{2}
  + \frac{1}{2n^{2}} \paren{ D_{\theta}^{\otimes{ 2 }}\pd_{\gamma}\hat{l}_{i} }^{\top}u^{\otimes{ 2 }}\paren{ \frac{ t }{ \hat{c} } }                                               \\
   & \quad + \frac{1}{4!n^{2}} \paren{ D_{\theta}^{\otimes{ 4 }}\hat{l}_{i} }^{\top}u^{\otimes 4} +\bigOp{ \frac{ 1 }{ n^{5/2} } }.
\end{align*}
Recall that $ \sum_{i} D_{\theta}\hat{l}_{i}=0$, $\hat{g}_{\theta}=  -\sum_{i} D_{\theta}D_{\theta}^{\top}\hat{l}_{i}/n$,
$  \hat{c}=\sum_{i} \pd_{\gamma}\hat{l}_{i}/n$, and $\hat{A}^{(r,s)} =\sum_{i} D_{\theta}^{\otimes{ r }}\paren{ \pd_{\gamma} }^{s}\hat{l}_{i}/n $.
It follows that
\begin{align*}
  \sum_{i=1}^{n} & \paren{ \log p (X_{i};\tilde{\theta}_{u},\tilde{\gamma}_{t})  - \log p (X_{i};\hat{\theta}_{\ML},\hat{\gamma}_{\ML} ) }                                            \\
                 & =  t-\frac{ 1 }{ 2 }u^{\top}\hat{g}_{\theta} u   + \frac{ 1 }{ \sqrt{n} }  \braces{ \hat{A}^{(1,1)\top} u \frac{ t }{ \hat{c} } + \frac{ 1 }{ 3! } \hat{A}^{(3,0)\top}  u^{\otimes{ 3 }} }                    \\
                 & \quad + \frac{ 1 }{ n } \braces{ \frac{1}{2}\hat{A}^{(0,2)} \frac{t^{2}}{\hat{c}^{2}} + \frac{ 1 }{ 2 } \hat{A}^{(2,1)\top} u^{\otimes{ 2 }} \frac{ t }{ \hat{c} }
    + \frac{ 1 }{ 4! } \hat{A}^{(4,0)\top} u^{\otimes{ 4 }}
  }                  +  \bigOp{ \frac{ 1 }{ n^{3/2} } }                                                                                                                            \\
                 & = t-\frac{ 1 }{ 2 } u^{\top}\hat{g}_{\theta} u + \frac{ 1 }{ \sqrt{n} } L_{1}(u,t) + \frac{ 1 }{ n } L_{2}(u,t) + \bigOp{ \frac{ 1 }{ n^{3/2} } },
\end{align*}
where
\begin{align*}
  L_{1}(u,t) & = \hat{A}^{(1,1)\top} u \frac{ t }{ \hat{c} } + \frac{ 1 }{ 3! } \hat{A}^{(3,0)\top}  u^{\otimes{ 3 }},                             \\
  L_{2}(u,t) & = \frac{1}{2}\hat{A}^{(0,2)} \frac{t^{2}}{\hat{c}^{2}} + \frac{ 1 }{ 2 } \hat{A}^{(2,1)\top} u^{\otimes{ 2 }} \frac{ t }{ \hat{c} }
  + \frac{ 1 }{ 4! } \hat{A}^{(4,0)\top} u^{\otimes{ 4 }}.
\end{align*}
Then, the asymptotic expansion of the exponential term is given by
\begin{align}
  \exp & \bracket{ \sum_{i=1}^{^{n}} \braces{ \log p (X_{i};\tilde{\theta}_{u},\tilde{\gamma}_{t})- \log p (X_{i};\hat{\theta}_{\ML},\hat{\gamma}_{\ML} ) } }                                                                                                                    \\
       & = \exp\braces{ t-\frac{ 1 }{ 2 } u^{\top}\hat{g}_{\theta} u + \frac{ 1 }{ \sqrt{n} } L_{1}(u,t) + \frac{ 1 }{ n } L_{2}(u,t) + \bigOp{ \frac{ 1 }{ n^{3/2} } } }                                                                                                        \\
       & = \exp\braces{ t-\frac{ 1 }{ 2 } u^{\top}\hat{g}_{\theta} u }\bracket{ 1+\frac{ 1 }{ \sqrt{n} } L_{1}(u,t) + \frac{ 1 }{ n } \paren{ L_{2}(u,t) + \frac{1}{2}L_{1}^{2}(u,t) } + \bigOp{ \frac{ 1 }{ n^{3/2} } } }. \label{eq:asymptotic_expansion_exponential_term_oTF}
\end{align}

Third, we derive the asymptotic expansion of the denominator of \eqref{eq:posterior_density_oTF}.
From \eqref{eq:asymptotic_expansion_prior_oTF} and \eqref{eq:asymptotic_expansion_exponential_term_oTF}, the numerator of \eqref{eq:posterior_density_oTF} is represented as
\begin{align}
   & \pi\paren{ \tilde{\theta}_{u},\tilde{\gamma}_{t} }
  \exp\bracket{ \sum_{i=1}^{n}\braces{ \log p (X_{i};\tilde{\theta}_{u},\tilde{\gamma}_{t})- \log p (X_{i};\hat{\theta}_{\ML},\hat{\gamma}_{\ML} ) } }                                                                                    \\
   & \qquad= \left[\hat{\pi} + \frac{ 1 }{ \sqrt{n} }P_{1}(u) + \frac{ 1 }{ n }P_{2}(u,t) + \bigOp{ \frac{ 1 }{ n^{3/2} } }\right]                                                                                                        \\
   & \qquad\quad\cdot \exp\braces{ t-\frac{ 1 }{ 2 } u^{\top}\hat{g}_{\theta} u }\bracket{ 1+\frac{ 1 }{ \sqrt{n} } L_{1}(u,t) + \frac{ 1 }{ n } \paren{ L_{2}(u,t) +\frac{ 1 }{ 2 } L_{1}^{2}(u,t) } + \bigOp{ \frac{ 1 }{ n^{3/2} } } } \\
   & \qquad= \exp\braces{ t-\frac{ 1 }{ 2 } u^{\top}\hat{g}_{\theta} u }\left[ \hat{\pi}+\frac{ 1 }{ \sqrt{n} }\braces{ P_{1}(u) + \hat{\pi}L_{1}(u,t) } \right.                                                                          \\
   & \qquad\quad \left.+ \frac{ 1 }{ n }\braces{ P_{1}(u) L_{1}(u,t) + P_{2}(u,t) + \hat{\pi}L_{2}(u,t) + \frac{1}{2}\hat{\pi}L_{1}^{2}(u,t) } +\bigOp{ \frac{ 1 }{ n^{3/2} } }\right] \label{eq:numerator_posterior_density_oTF}
\end{align}
For the description of the integrating of the numerator in \eqref{eq:posterior_density_oTF}, we introduce symmetrizer matrices $ \symope{r} $ as follows (See \cite{holmquist1988MomentsCumulantsMultivariate} for the details).
Let $ e_{1},\ldots,e_{d} $ be the standard basis of $ \bR^{d} $. Symmetrizer matrix $ S_{r}\in \bR^{d^{r}\times d^{r}} $ acts on $ r $-tensor vectors as
\begin{align}
  S_{r} \paren{ e_{i_{1}}\otimes\cdots\otimes e_{i_{r}} } = \frac{1}{r!}\sum_{\pi\in \mathfrak{S}_{r}} e_{i_{\pi(1)}}\otimes\cdots\otimes e_{i_{\pi(r)}}
  \label{eq:symmetrizer_matrix_definition}
\end{align}
for $ i_{1},\ldots,i_{r}=\Ilist $, where $ \mathfrak{S}_{r} $ is the symmetric group of degree $ r $. When $ r=2,3 $, it holds that
\begin{align}
  S_{2}\paren{ e_{i}\otimes e_{j} }              & = \frac{1}{2}\paren{ e_{i}\otimes e_{j} + e_{j} \otimes e_{i} },                                                             \\
  S_{3}\paren{ e_{i}\otimes e_{j}\otimes e_{k} } & = \frac{1}{6}\left(e_{i}\otimes e_{j}\otimes e_{k}+e_{j}\otimes e_{k}\otimes e_{i} + e_{k}\otimes e_{i}\otimes e_{j} \right. \\
                                                 & \left. \quad + e_{i}\otimes e_{k}\otimes e_{j}+ e_{k}\otimes e_{j}\otimes e_{i} + e_{j}\otimes e_{i}\otimes e_{k}  \right)
\end{align}
for $ i,j,k=\Ilist $. Let us go back to the proof.
By integrating the numerator term \eqref{eq:numerator_posterior_density_oTF}  over $ u $ and $ t $, we obtain the denominator in \eqref{eq:posterior_density_oTF}.
The following equations are useful for the calculation of the integrals:
\begin{align*}
  \frac{ 1 }{ \paren{2 \pi}^{d/2} \sqrt{\det \hat{g}_{\theta}^{-1}} } \int_{\bR} u^{\otimes{ r }} \e ^{- u^{\top}\hat{g}_{\theta} u/2}du & =
  \begin{cases}
    0                                                                               & (\text{when $ r $ is odd})   \\
    (r-1)!! \symope{r} \mathrm{vec} \paren{ \hat{g}_{\theta}^{-1}}^{\otimes{ r/2 }} & (\text{when $ r $ is even}),
  \end{cases}                                       \\
  \int_{-\infty}^{0}t^{r}\e ^{t} dt                                                                                                      & =  r!\paren{ -1 }^{r}.
\end{align*}
See \cite{holmquist1988MomentsCumulantsMultivariate} for the first equation.
We calculate the integrals by terms.
Let $ \nu(\hat{g}_{\theta}^{-1})= \paren{2 \pi}^{d/2} \sqrt{\det \hat{g}_{\theta}^{-1}}$.
It holds that
\begin{align}
  \int \braces{ P_{1}(u) + \hat{\pi}L_{1}(u,t) } \e^{t- u^{\top}\hat{g}_{\theta} u/2} dudt=0
\end{align}
since $ \int u \e^{- u^{\top}\hat{g}_{\theta} u/2}du=0 $.
In the same way, it follows that
\begin{align*}
   \int P_{2}(u,t)\e^{t- u^{\top}\hat{g}_{\theta} u/2}dudt   &= \int \braces{ \pd_{\gamma}\hat{\pi}\frac{ t }{ \hat{c} } +
  \frac{ 1 }{ 2 } D_{\theta}^{\otimes{ 2 }\top} \hat{\pi}  u^{\otimes{ 2 }}} \e^{t- u^{\top}\hat{g}_{\theta} u/2}dudt                                                                              \\
   & = \nu(\hat{g}_{\theta}^{-1})\braces{  -\frac{ 1 }{ \hat{c} }\pd_{\gamma}\hat{\pi} + \frac{ 1 }{ 2 } \paren{ D_{\theta}^{\otimes{ 2 } \top}\hat{\pi} }\,\mathrm{vec}\,\hat{g}_{\theta}^{-1} },
\end{align*}
\begin{align*}
   & \int P_{1}(u) L_{1}(u,t) \e^{t- u^{\top}\hat{g}_{\theta} u/2}dudt                                                                                                           \\
   & =  \int\paren{ D_{\theta}^{\top} \hat{\pi}  u }\paren{ \frac{ 1 }{ \hat{c} }\hat{A}^{(1,1)\top}\hat{g}^{-1/2}ut + \frac{ 1 }{ 3! } \hat{A}^{(3,0)\top}
  u^{\otimes{ 3 }} } \e^{t- u^{\top}\hat{g}_{\theta} u/2}dudt                                                                                                                    \\
   & =\frac{ 1 }{ \hat{c} } \paren{ D_{\theta}^{\top}\hat{\pi}\otimes \hat{A}^{(1,1)\top}  }  \int u^{\otimes{ 2 }}t \e^{t- u^{\top}\hat{g}_{\theta} u/2} du dt                  \\
   & \quad + \frac{ 1 }{ 3! } \paren{ D_{\theta}^{\top}\hat{\pi}\otimes\hat{A}^{(3,0)\top}    }\int u^{\otimes{ 4 }} \e^{t- u^{\top}\hat{g}_{\theta} u/2} dudt                   \\
   & = \nu(\hat{g}_{\theta}^{-1})\left\{-\frac{ 1 }{ \hat{c} }\paren{  D_{\theta}^{\top}\hat{\pi}\otimes \hat{A}^{(1,1)\top}  }  \mathrm{vec}\,\hat{g}_{\theta}^{-1}  + \frac{3}{3!}\paren{  D_{\theta}^{\top}\hat{\pi}\otimes\hat{A}^{(3,0)\top}    } \mathrm{vec}\paren{ \hat{g}_{\theta}^{-1} }^{\otimes{ 2 }}\right\}, \\
\end{align*}
\begin{align*}
   & \int L_{2}(u,t)\e^{t- u^{\top}\hat{g}_{\theta} u/2}dudt                                                                                                       \\
   & = \int \braces{ \frac{1}{2}\hat{A}^{(0,2)} \frac{t^{2}}{\hat{c}^{2}} + \frac{ 1 }{ 2 } \hat{A}^{(2,1)\top} u^{\otimes{ 2 }} \frac{ t }{ \hat{c} }
    +\frac{ 1 }{ 4! } \hat{A}^{(4,0)\top} u^{\otimes{ 4 }}
  } \e^{t- u^{\top}\hat{g}_{\theta} u/2}dudt                                                                                                                       \\
   & = \nu(\hat{g}_{\theta}^{-1})\braces{ \frac{ 1 }{ \hat{c}^{2} } \hat{A}^{(0,2)} - \frac{ 1 }{ 2\hat{c}}\hat{A}^{(2,1)\top} \mathrm{vec}\,\hat{g}_{\theta}^{-1}
    +\frac{ 3 }{ 4! }\hat{A}^{(4,0)\top} \mathrm{vec}\,\paren{ \hat{g}_{\theta}^{-1} }^{\otimes{ 2 }} }   ,
\end{align*}
\begin{align*}
   & \int L_{1}^{2}(u,t)\e^{t- u^{\top}\hat{g}_{\theta} u/2}dudt                                                                                                                 \\
   & = \int \paren{ \frac{ 1 }{ \hat{c} }\hat{A}^{(1,1)\top}\hat{g}^{-1/2}ut + \frac{ 1 }{ 3! } \hat{A}^{(3,0)\top}
  u^{\otimes{ 3 }} }^{2} \e^{t- u^{\top}\hat{g}_{\theta} u/2}dudt                                                                                                                \\
   & = \int \frac{ 1 }{ \hat{c}^{2} }\paren{ \hat{A}^{(1,1)\top} }^{\otimes{ 2 }}  u^{\otimes{ 2 }}t^{2} \e^{t- u^{\top}\hat{g}_{\theta} u/2}dudt + \int \frac{ 1 }{ \paren{ 3! }^{2} } \paren{ \hat{A}^{(3,0)\top} }^{\otimes{ 2 }}   u^{\otimes{ 6 }} \e^{t- u^{\top}\hat{g}_{\theta} u/2}dudt                        \\
   & \quad + \int \frac{ 2 }{ 3!\hat{c} }\paren{ \hat{A}^{(1,1)} \otimes \hat{A}^{(3,0)} }^{\top}   u^{\otimes{ 4 }}t \e^{t- u^{\top}\hat{g}_{\theta} u/2}dudt                   \\
   & = \nu(\hat{g}_{\theta}^{-1})\left\{\frac{ 2 }{ \hat{c}^{2} }\paren{ \hat{A}^{(1,1)\top} }^{\otimes{ 2 }}   \mathrm{vec} \,\hat{g}_{\theta}^{-1}
  + \frac{ 15 }{ 3!^{2} }\paren{ \hat{A}^{(3,0)\top} }^{\otimes{ 2 }}  \symope{6 } \mathrm{vec} \paren{\hat{g}_{\theta}^{-1}}^{\otimes{ 3 }} \right.                             \\
   & \quad \left.- \frac{ 1 }{ \hat{c} }\paren{ \hat{A}^{(1,1)} \otimes \hat{A}^{(3,0)} }^{\top}   \mathrm{vec} \paren{\hat{g}_{\theta}^{-1}}^{\otimes{ 2 }}\right\}.
\end{align*}
We omitted the symmetrizers $ S_{2},S_{4} $ in the above equations because of the symmetry of the terms $ \hat{A} $ and $ g_{\theta} $.
Then, the asymptotic expansion of the denominator of \eqref{eq:posterior_density_oTF} is
\begin{align}
  \paren{2\pi}^{d/2} \sqrt{\det \hat{g}_{\theta}^{-1}}\braces{ \hat{\pi}+\frac{ 1 }{ n }K_{n} + \bigO{\frac{ 1 }{ n^{3/2} }} } \label{eq:asymptotic_expansion_denominator_oTF},
\end{align}
where
\begin{align}
  K_{n} & \coloneqq  -\frac{ 1 }{ \hat{c} }\pd_{\gamma}\hat{\pi} + \frac{ 1 }{ 2 } D_{\theta}^{\otimes{ 2 }}\hat{\pi}\, \mathrm{vec} \,\hat{g}_{\theta}^{-1}                      \\
        & \quad -\frac{ 1 }{ \hat{c} }\paren{  D_{\theta}\hat{\pi}\otimes \hat{A}^{(1,1)} }^{\top}   \mathrm{vec} \,\hat{g}_{\theta}^{-1}
  + \frac{3}{3!} \paren{ D_{\theta}\hat{\pi}\otimes\hat{A}^{(3,0)} }^{\top}  \mathrm{vec} \paren{\hat{g}_{\theta}^{-1}}^{\otimes{ 2 }}                                            \\
        & \quad +\hat{\pi}\left\lbrace \frac{ 1 }{ \hat{c}^{2} } \hat{A}^{(0,2)} - \frac{ 1 }{ 2\hat{c}} \hat{A}^{(2,1)\top} \mathrm{vec} \,\hat{g}_{\theta}^{-1}
  +\frac{ 3 }{ 4! } \hat{A}^{(4,0)\top}\mathrm{vec}\,\paren{ \hat{g}_{\theta}^{-1} }^{\otimes{ 2 }}   \right.                                                                     \\
        & \quad + \frac{ 1 }{ \hat{c}^{2} } \paren{ \hat{A}^{(1,1)\top} }^{\otimes{ 2 }}   \mathrm{vec} \,\hat{g}_{\theta}^{-1}
  + \frac{ 15 }{ 2\cdot 3!^{2} } \paren{ \hat{A}^{(3,0)\top} }^{\otimes{ 2 }}  S_{6} \mathrm{vec} \paren{\hat{g}_{\theta}^{-1}}^{\otimes{ 3 }}                                          \\
        & \quad \left. - \frac{ 1 }{ 2\hat{c} } \paren{ \hat{A}^{(1,1)} \otimes \hat{A}^{(3,0)} }^{\top}  \mathrm{vec} \paren{\hat{g}_{\theta}^{-1}}^{\otimes{ 2 }}\right\rbrace.
\end{align}

Finally, we derive the asymptotic expansion of the posterior \eqref{eq:posterior_density_oTF} from the above calculations.

By \eqref{eq:asymptotic_expansion_prior_oTF}, \eqref{eq:asymptotic_expansion_exponential_term_oTF} and \eqref{eq:asymptotic_expansion_denominator_oTF},
it holds that
\begin{align}
  \pi\paren{ u,t;\sample }=\frac{ 1 }{ \paren{ 2\pi }^{d/2}\sqrt{\det \hat{g}_{\theta}^{-1}} }\exp\braces{t - u^{\top}\hat{g}_{\theta} u/2  }\bracket{ 1+\frac{ 1 }{ \sqrt{n} }B_{1}(u,t) + \frac{ 1 }{ n }B_{2}(u,t) + \bigOp{ \frac{ 1 }{ n^{3/2} } } },
  \label{eq:asymptotic_expansion_posterior_density_oTF_appendix}
\end{align}
where
\begin{align}
  B_{1} & = \frac{ P_{1}(u) }{ \hat{\pi} } + L_{1}(u,t)          \\
        & = \frac{ 1 }{ \hat{\pi} }D_{\theta}^{\top}\hat{\pi}  u
  + \hat{A}^{(1,1)\top} u \frac{ t }{ \hat{c} } + \frac{ 1 }{ 3! } \hat{A}^{(3,0)\top}  u^{\otimes{ 3 }},
\end{align}
\begin{align}
  B_{2} & = \frac{ 1 }{ \hat{\pi} } \braces{P_{1}(u) L_{1}(u,t) + P_{2}(u,t) + \hat{\pi}L_{2}(u,t) + \frac{ 1 }{ 2 } \hat{\pi}L_{1}^{2}(u,t) - K_{n}}                                      \\
        & = \frac{ 1 }{ \hat{c}\hat{\pi} }\pd_{\gamma}\hat{\pi}\paren{ t+1 }
  + \frac{ 1 }{ 2\hat{\pi} } \paren{ D_{\theta}^{\otimes{ 2 }}\hat{\pi} }^{\top} (u^{\otimes{ 2 }}-\mathrm{vec} \,\hat{g}_{\theta}^{-1})                                                   \\
        & \quad +\frac{ 1 }{ \hat{c}\hat{\pi}  }\paren{  D_{\theta}\hat{\pi}\otimes \hat{A}^{(1,1)}  }^{\top}  \paren{u^{\otimes{ 2 }}t +\mathrm{vec} \,\hat{g}_{\theta}^{-1} }
  + \frac{ 1 }{ 3!\hat{\pi} } \paren{ D_{\theta}\hat{\pi}\otimes\hat{A}^{(3,0)} }^{\top}  \paren{ u^{\otimes{ 4 }}- 3\mathrm{vec} \paren{\hat{g}_{\theta}^{-1}}^{\otimes{ 2 }} }           \\
        & \quad + \frac{ 1 }{ 2\hat{c}^{2} }\hat{A}^{(0,2)}\paren{ t^{2}-2 } - \frac{ 1 }{ 2\hat{c}}\hat{A}^{(2,1)\top} \paren{u^{\otimes{ 2 }}t + \mathrm{vec} \,\hat{g}_{\theta}^{-1} }
  +\frac{ 1 }{ 4! }\hat{A}^{(4,0)\top} \paren{ u^{\otimes{ 4 }}-3\mathrm{vec} \paren{\hat{g}_{\theta}^{-1}}^{\otimes{ 2 }} }                                                               \\
        & \quad + \frac{ 1 }{2 \hat{c}^{2} }\paren{ \hat{A}^{(1,1)\top} }^{\otimes{ 2 }}   \paren{ u^{\otimes{ 2 }}t^{2}-2\mathrm{vec} \,\hat{g}_{\theta}^{-1} }
  + \frac{ 1 }{2\cdot 3!^{2} }\paren{ \hat{A}^{(3,0)\top} }^{\otimes{ 2 }} S_{6} \paren{ u^{\otimes{ 6 }}-15\mathrm{vec} \paren{\hat{g}_{\theta}^{-1}}^{\otimes{ 3 }}  }                        \\
        & \quad +\frac{ 1 }{ 3!\hat{c} }\paren{ \hat{A}^{(1,1)} \otimes \hat{A}^{(3,0)} }^{\top}  \paren{  u^{\otimes{ 4 }}t+3\mathrm{vec} \paren{\hat{g}_{\theta}^{-1}}^{\otimes{ 2 }} }.
\end{align}

Note that the next term $ B_{3} $ is the polynomial of odd degree with respect to $ u $.
In the Taylor expansions of $ \pi\paren{ \tilde{\theta}_{u},\tilde{\gamma}_{t} } $ and $ \tilde{l}_{i}(u,t) \;(i=\Ilist) $,
the polynomials consisting of only $ u t^{2},\; u^{\otimes 3} t $ and $u^{\otimes 5}$ appear in the terms of order $ n^{-r/2}$
since $  \tilde{\theta}_{u}  = \hat{\theta}_{\ML}  + u/ \sqrt{n},\; \tilde{\gamma}_{t}  = \hat{\gamma}_{\ML} + t/\paren{ n\hat{c} } $.
Then, in the last expansion \eqref{eq:asymptotic_expansion_posterior_density_oTF_appendix}, the term of order $ n^{-3/2} $ is the polynomial of odd degree with respect to $ u $.

Thus, we complete the proof.

\section{Proof of Theorem \ref{thm:probability_matching_prior_oTF}}
\label{sec:appendix_pf_probability_matching_prior_oTF}

\subsection{Probability matching prior for the truncation parameter}

  For proving the theorem, we calculate the asymptotic expansion of the posterior probability $ P_\pi (T\leq z\mid\sample) $
  and the frequentist probability $ P_{\theta,\gamma}^{n}(T\leq z) $.

  By integrating the expansion of the posterior density \eqref{eq:posterior_density_oTF} over $ u $, we have
  \begin{align}
    \pi\paren{ t;\sample }
     & = \e^{t} \left[1+\frac{ 1 }{ n\hat{c} }\left\lbrace
    \pd_{\gamma}\log \hat{\pi} + D_{\theta}^{\top}\log \hat{\pi}\otimes \hat{A}^{(1,1)\top}  \mathrm{vec} \,\hat{g}_{\theta}^{-1} \right. \right. \\
     & \qquad+ \left. \frac{ 1 }{ 2 }\hat{A}^{(2,1)\top}  \mathrm{vec} \,\hat{g}_{\theta}^{-1}
      +\frac{1}{2} \paren{ \hat{A}^{(1,1)} \otimes \hat{A}^{(3,0)} }^{\top}   \mathrm{vec} \paren{\hat{g}_{\theta}^{-1}}^{\otimes{ 2 }}
    \right\rbrace \paren{ t+1 }                                                                                                                   \\
     & \quad \left. + \frac{1}{n\hat{c}^{2}} \braces{
        \frac{1}{2}\hat{A}^{ (0,2) }
        + \frac{1}{2}\paren{ A^{(1,1)\top} }^{\otimes{ 2 }} \mathrm{vec} \,\hat{g}_{\theta}^{-1}}
      (t^{2}-2)+\bigOp{\frac{1}{n^{2}}} \right] .
    \label{eq:posterior_distribution_marginal_t}
  \end{align}
  The term of order $ n^{-3/2} $ vanishes since it is the polynomial of odd degree with respect to $ u $.
  We set $ \alpha = 1 - \e^{z} $
  The posterior probability is
  \begin{align}
    P_{\pi}^{n}(T\leq z) & = \int_{-\infty}^{z}\pi (t;\sample)dt                                                                                            \\
                         & = (1-\alpha)\left[1+\frac{ 1 }{ n\hat{c} }\left\lbrace
    \pd_{\gamma}\log \hat{\pi} + D_{\theta}^{\top}\log \hat{\pi}\otimes \hat{A}^{(1,1)\top}  \mathrm{vec} \,\hat{g}_{\theta}^{-1} \right. \right.           \\
                         & \qquad+ \left. \frac{ 1 }{ 2 }\hat{A}^{(2,1)\top}  \mathrm{vec} \,\hat{g}_{\theta}^{-1}
      +\frac{1}{2} \paren{ \hat{A}^{(1,1)} \otimes \hat{A}^{(3,0)} }^{\top}   \mathrm{vec} \paren{\hat{g}_{\theta}^{-1}}^{\otimes{ 2 }}
    \right\rbrace z                                                                                                                                         \\
                         & \quad \left. + \frac{1}{n\hat{c}^{2}} \braces{
        \frac{1}{2}\hat{A}^{ (0,2) }
        + \frac{1}{2}\paren{ A^{(1,1)\top} }^{\otimes{ 2 }} \mathrm{vec} \,\hat{g}_{\theta}^{-1}}
    z\paren{ z-2 }\right]+\bigOp{\frac{1}{n^{3/2}}}                                                                                                         \\
                         & = (1-\alpha)\left[1+\frac{ 1 }{ nc }\left\lbrace
    \pd_{\gamma}\log \pi + D_{\theta}^{\top}\log \pi\otimes A^{(1,1)\top}  \mathrm{vec} \,g_{\theta}^{-1} \right. \right.                                   \\
                         & \qquad+ \left. \frac{ 1 }{ 2 }A^{(2,1)\top}  \mathrm{vec} \,g_{\theta}^{-1}
    +\frac{1}{2} \paren{ A^{(1,1)} \otimes A^{(3,0)} }^{\top}   \mathrm{vec} \paren{g_{\theta}^{-1}}^{\otimes{ 2 }}
    \right\rbrace z                                                                                                                                         \\
                         & \quad \left. + \frac{1}{nc^{2}} \braces{
    \frac{1}{2}A^{ (0,2) }
    + \frac{1}{2}\paren{ A^{(1,1)\top} }^{\otimes{ 2 }} \mathrm{vec} \,g_{\theta}^{-1}}
    z(z-2)\right]+\bigOp{\frac{1}{n^{3/2}}}                                                                                                                 \\
                         & = (1-\alpha)\left \lbrack 1+\frac{1}{nc}\braces{ \pd_{\gamma}\log \pi +A^{(1,1)\top}g_{\theta}^{-1}D_{\theta}\log \pi }z \right. \\
                         & \quad + \left.\frac{ 1 }{ cn }\braces{ Q_{1}(\theta,\gamma)z+Q_{2}(\theta,\gamma)z(z-2) }
    \right \rbrack +\bigO{\frac{ 1 }{ n^{3/2} }}, \label{eq:asymptotic_expansion_posterior_t}
  \end{align}
  where
  \begin{align}
    Q_{1}(\theta,\gamma) & = \frac{ 1 }{ 2 }A^{(2,1)\top}  \mathrm{vec} \,g_{\theta}^{-1}
    +\frac{1}{2} \paren{ A^{(1,1)} \otimes A^{(3,0)} }^{\top}   \mathrm{vec} \paren{g_{\theta}^{-1}}^{\otimes{ 2 }}, \\
    Q_{2}(\theta,\gamma) & = \frac{1}{2c}A^{ (0,2) }
    + \frac{1}{2c}\paren{ A^{(1,1)\top} }^{\otimes{ 2 }} \mathrm{vec} \,g_{\theta}^{-1}.
  \end{align}
  The shrinkage argument \citep[Section 1.2]{datta2004ProbabilityMatchingPriors}  provides the frequentist probability $ P_{\theta,\gamma}^{n}(T\leq z) $.
  After replacing $ \pi $ by $ \pi_{\delta} $, a density convergence weakly to the measure degenerate at the point $ \theta $, we integrate the expansion of the posterior density \eqref{eq:asymptotic_expansion_posterior_t} with respect to $ \pi_{\delta} $ and letting $ \delta\to 0 $.
  Here, it follows that
  \begin{align}
    \lim_{\delta \downarrow 0} \int \frac{1}{c(x,y)} \pd_{\gamma}\log \paren{ \pi_{\delta} (x,y) } \pi_{\delta}(x,y) dx dy                                   & = -\pd_{\gamma}\paren{ \frac{ 1 }{ c(\theta,\gamma) } },                                                 \\
    \lim_{\delta \downarrow 0} \int \frac{1}{c(x,y)}A^{(1,1)\top}(x,y)g_{\theta}^{-1}(x,y)D_{\theta}\log \paren{ \pi_{\delta}(x,y) } \pi_{\delta}(x,y) dx dy & = - D_{\theta}^{\top} \paren{ \frac{ 1 }{ c }  g_{\theta}^{-1}(\theta,\gamma)A^{(1,1)}(\theta,\gamma) },
  \end{align}
  Then, we obtain the expansion of the probability $ P_{\theta,\gamma}^{n} (T\leq z) $ as follows:
  \begin{align}
    P_{\theta,\gamma}^{n}(T \leq z) & = (1-\alpha)\left \lbrack 1 + \frac{1}{n}\braces{ -\pd_{\gamma}\paren{ \frac{ 1 }{ c } } - D_{\theta}^{\top} \paren{ \frac{ 1 }{ c }  g_{\theta}^{-1}A^{(1,1)} } }z \right. \\
                                    & \qquad +\left.\frac{ 1 }{ cn }\braces{ Q_{1}(\theta,\gamma)z+Q_{2}(\theta,\gamma)z(z-2) }
    \right \rbrack +\bigO{\frac{ 1 }{ n^{3/2} }}\label{eq:asymptotic_expansion_frequentist_probability_t}
  \end{align}
  Then, by comparing \eqref{eq:asymptotic_expansion_posterior_t} and \eqref{eq:asymptotic_expansion_frequentist_probability_t},
  we get the conditions for the probability matching prior $ \pmp[\gamma] $ that a prior $ \pi $ satisfies the partial differential equation
  \begin{align}
    \pd_{\gamma}\log \pi +A^{(1,1)\top}g_{\theta}^{-1}D_{\theta}\log \pi & = c\braces{  -\pd_{\gamma}\paren{ \frac{ 1 }{ c } } - D_{\theta}^{\top} \paren{ \frac{ 1 }{ c }  g_{\theta}^{-1}A^{(1,1)} } }
    \label{eq:condition_ProbabilityMatchingPrior_gamma_oTF_tensorial}
  \end{align}
  Here, it follows that
  \begin{align}
    D_{\theta}^{\top} \paren{ \frac{ 1 }{ c }  g_{\theta}^{-1}A^{(1,1)} } = -\frac{1}{c}A^{(1,1)\top}g_{\theta}^{-1}D_{\theta}\log c+ \frac{1}{c}\paren{ D_{\theta}^{\top}g_{\theta}^{-1} }  A^{(1,1)}
    +\frac{1}{c} \paren{ D_{\theta}A^{(1,1)} }^{\top} \mathrm{vec}g_{\theta}^{-1}
  \end{align}
  and the components of $ D_{\theta}^{\top}g_{\theta}^{-1}A^{(1,1)} $ are written as
  \begin{align}
    \paren{ \pd_{i}g^{ij} }A^{(1,1)}_{j} & =  -\paren{ \pd_{i}g_{km}}g^{ik}g^{jm} A^{(1,1)}_{j}                                                                                    \\
                                         & = -\paren{ \pd_{m}g_{ik} + \Gamma^{g}_{ik,m} - \Gamma^{g}_{km,i} }g^{ik}g^{jm} A^{(1,1)}_{j}                                            \\
                                         & = -A^{(1,1)}_{j}g^{jm}\pd_{m}\log \paren{ \det g_{\theta} }-A^{(1,1)}_{j}g^{jm}g^{ik}\paren{ \Gamma^{g}_{ik,m} - \Gamma^{g}_{km,i} }  .
  \end{align}
  Then, the condition \eqref{eq:condition_ProbabilityMatchingPrior_gamma_oTF_tensorial} is represented as
  \begin{align}
    \pd_{\gamma}\log \pi + A^{(1,1)}_{i} g^{ij} \pd_{j} \log \pi
     & = \pd_{\gamma}\log c - \pd_{i}A_{j}^{(1,1)}g^{ij}                                                                                                           \\
     & \quad  + A^{(1,1)}_{j}g^{jm} \braces{ \pd_{m}\log c + \pd_{m}\log \paren{  \det g_{\theta} } - g^{ik}\paren{  \Gamma^{g}_{ik,m} - \Gamma^{g}_{km,i}  } }  .
  \end{align}

\subsection{Probability matching prior for the regular parameter}

  Consider the case that $ \theta^{1} $ is the parameter of interest.
  We will calculate the asymptotic expansion of the posterior probability $ P_{\pi}^{n}(U^{1}\leq z) $ and the frequentist probability $ P_{\theta,\gamma}^{n}(U^{1}\leq z) $ to derive the
  conditions of probability matching prior $ \pi^{1}_{PM} $.

  By integrating the expansion of the marginal posterior density \eqref{eq:posterior_density_oTF} over $ t $, we have
  \begin{align}
    \pi\paren{ u;\sample } & = \phi_{d}(u;0,\hat{g}_{\theta}^{-1})
    \braces{ 1+\frac{ 1 }{ \sqrt{n} }\paren{ \frac{ 1 }{ \hat{\pi} }\paren{ D_{\theta}^{\top}\hat{\pi} }  u
        - \frac{1}{\hat{c}}\hat{A}^{(1,1)\top}  u + \frac{ 1 }{ 3! } \hat{A}^{(3,0)\top}  u^{\otimes 3} }+ \bigOp{ \frac{ 1 }{ n } } }
    \label{eq:posterior_distribution_marginal_u}
  \end{align}
  We denote the density of the $ r $-dimensional normal distribution with mean $ \mu $ and covariance matrix $ \Sigma $ by $ \phi_{r}(\cdot \,;\mu,\Sigma) $.
  Let
  \begin{align}
    u_{-1}       & \coloneqq \paren{ u_{2},\ldots,u_{d} },                                                                                                   \\
    \hat{m}_{-1} & \coloneqq \paren{ \hat{g}_{\theta}^{21}/\hat{g}_{\theta}^{11},\ldots,\hat{g}_{\theta}^{d1} /\hat{g}_{\theta}^{11} }^{\top} \in \bR^{d-1}, \\
    \hat{m}      & \coloneqq \paren{1, \hat{m}_{-1} }^{\top} \in \bR^{d},                                                                                    \\
    \hat{h}_{-1} & \coloneqq\paren{ \hat{g}^{ij}-\hat{g}^{i1}\hat{g}^{j1}/\hat{g}^{11} }_{2\leq i,j\leq d} \in \bR^{(d-1)\times (d-1)},                      \\
    \hat{h}      & \coloneqq  \begin{pmatrix}
                                0      & 0 & \cdots       & 0 \\
                                0      &   &              &   \\
                                \vdots &   & \hat{h}_{-1} &   \\
                                0      &   &              &
                              \end{pmatrix} \in \bR^{d\times d}.
  \end{align}
  We decompose the density $ \phi_{d}(u;0,\hat{g}_{\theta}^{-1}) $ as
  \begin{align}
    \phi_{d}(u;0,\hat{g}_{\theta}^{-1}) & = \phi_{1}(u^{1};0,\hat{g}^{11}) \phi_{d-1}(u_{-1};u^{1}\hat{m}_{-1},\hat{h}_{-1})
  \end{align}
  for the calculation of the probability $ P_{\pi}^{n}(U^{1}\leq z) $.
  Then, the posterior probability of $ U^{1} $ is given by
  \begin{align}
    \pi(u^{1};\sample) & = \int \pi(u;\sample)du_{-1}                                                                                                                                                                                                               \\
                       & = \phi_{1}(u^{1};0,\hat{g}^{11})\int  \braces{ 1+\frac{ 1 }{ \sqrt{n} }\paren{ \frac{ 1 }{ \hat{\pi} }\paren{ D_{\theta}^{\top}\hat{\pi} }  u
    - \frac{1}{\hat{c}}\hat{A}^{(1,1)\top} u  + \frac{ 1 }{ 3! } \hat{A}^{(3,0)\top}  u^{\otimes 3} } }                                                                                                                                                             \\
                       & \quad \cdot \phi_{d-1}(u_{-1};u^{1}\hat{m}_{-1},\hat{h}_{-1})  du_{-1} \quad + \bigOp{ \frac{ 1 }{ n } }                                                                                                                                   \\
                       & = \phi_{1}(u^{1};0,\hat{g}^{11})\left\{ 1  +  \frac{ 1 }{ \sqrt{n} }\left( \paren{ D_{\theta}\log \hat{\pi} }^{\top} \hat{m}u^{1}
    - \frac{ 1 }{ \hat{c} }\hat{A}^{(1,1)\top}\hat{m}u^{1} \right.\right.                                                                                                                                                                                           \\
                       & \quad + \left. \left.\frac{1}{2} \hat{A}^{(3,0)\top}\paren{\mathrm{vec}\, \hat{h}\otimes \hat{m} }u^{1} + \frac{ 1 }{ 3! } \hat{A}^{(3,0)\top} \hat{m}^{\otimes 3}\paren{ u^{1} }^{3}\right)  \right\rbrace  + \bigOp{ \frac{ 1 }{ n } } .
  \end{align}
  Let $ \hat{\sigma}=\sqrt{\hat{g}^{11}} $. The posterior probability of $ U^{1}/\hat{\sigma} $ is
  \begin{align}
    P_{\pi}^{n}(U^{1}/\hat{\sigma} \leq z\mid \sample)
     & = \int_{-\infty}^{ \hat{\sigma}z}\pi( u^{1};\sample)du^{1}                                                                                                                                                                                                     \\
     & = \int_{-\infty}^{ z}\pi( \hat{\sigma}v;\sample) \hat{\sigma} dv                                                                                                                                                                                               \\
     & = \int_{-\infty}^{ z}\phi(v) dv                                                                                                                                                                                                                                \\
     & \quad + \frac{ 1 }{ \sqrt{n} }\braces{ \paren{ D_{\theta}\log \hat{\pi} - \frac{ \hat{A}^{(1,1)} }{ \hat{c} }  }^{\top}\hat{m} +\frac{ 1 }{ 2 } \hat{A}^{(3.0)\top} \paren{ \mathrm{vec}\, \hat{h}\otimes \hat{m} } }\hat{\sigma}\int_{-\infty}^{z} v\phi(v)dv \\
     & \quad + \frac{ 1 }{ 3!\sqrt{n} } \hat{A}^{(3,0)\top} \hat{m}^{\otimes 3}\hat{\sigma}^{3}\int_{-\infty}^{z}v^{3}\phi(v)dv  + \bigOp{ \frac{ 1 }{ n } }                                                                                                          \\
     & = \Phi(z) -  \frac{ 1 }{ \sqrt{n} }\braces{ \paren{ D_{\theta}\log \hat{\pi} - \frac{ \hat{A}^{(1,1)} }{ \hat{c} }  }^{\top}\hat{m} + \frac{ 1 }{ 2 } \hat{A}^{(3.0)\top} \paren{ \mathrm{vec}\, \hat{h}\otimes \hat{m} } }\hat{\sigma} \phi(z)                \\
     & \quad - \frac{ 1 }{ 3!\sqrt{n} } \hat{A}^{(3,0)\top} \hat{m}^{\otimes 3}
    \hat{\sigma}^{3} \paren{ z^{2}+2}  \phi(z)  + \bigOp{ \frac{ 1 }{ n } }                                                                                                                                                                                           \\
     & =\Phi(z) + \frac{ \sqrt{g}^{11}}{ \sqrt{n} }\braces{ -\paren{D_{\theta}\log \pi}^{\top} m  +Q_{3}(\theta,\gamma,z) }\phi(z)+ \bigOp{ \frac{ 1 }{ n } }
    \label{eq:asymptotic_expansion_posterior_u}
  \end{align}
  where
  \begin{align}
    Q_{3}(\theta,\gamma,t)=\frac{ 1 }{ c}A^{(1,1)\top}m - \frac{ 1 }{2} A^{(3,0)\top}\paren{\mathrm{vec}\, h\otimes m } - \frac{ 1 }{ 3! } A^{(3,0)\top} g^{11}m^{\otimes 3}(z^{2}+2) .
  \end{align}
  On the other hands, by \eqref{eq:asymptotic_expansion_posterior_u} and the shrinkage argument,
  we obtain the expansion of the probability $ P_{\theta,\gamma}^{n} (U\leq z) $ as follows:
  \begin{align}
    P_{\theta,\gamma}^{n}(U^{1}/\hat{\sigma}\leq z) & = \Phi_{1}(z)+                                                                \\
                                       & \quad + \frac{ \sqrt{g}^{11}  }{ \sqrt{n} }\paren{D_{\theta}^{\top}\paren{ \sqrt{{g}^{11}}m }
      + Q_{3} (\theta,\gamma,z) }\phi_{1}(z) + \bigO{ \frac{ 1 }{ n } }.
    \label{eq:asymptotic_expansion_frequentist_u}
  \end{align}
  Then, by comparing \eqref{eq:asymptotic_expansion_posterior_u} and \eqref{eq:asymptotic_expansion_frequentist_u},
  we get the conditions for the probability matching prior $ \pmp[1] $ as
  \begin{align}
    \frac{g^{i1}}{\sqrt{{g}^{11}}}\pd_{i}\log \pi & =  -\pd_{i}\paren{ \frac{g^{i1}}{\sqrt{{g}^{11}}} }.
  \end{align}
\section{Proof of Theorem \ref{thm:moment_matching_prior_oTF}}
\label{sec:appendix_pf_moment_matching_prior_oTF}

\subsection{Moment matching prior for the truncation parameter}

By integrating \eqref{eq:posterior_distribution_marginal_t}, the posterior mean of $ t $ is
\begin{align}
  \Exp[t][\sample] & =  \int_{-\infty}^{0} t\e^{t}dt +\frac{ 1 }{ n\hat{c} }\left\lbrace
  \pd_{\gamma}\log \hat{\pi} + D_{\theta}^{\top}\log \hat{\pi}\otimes \hat{A}^{(1,1)\top}  \mathrm{vec} \,\hat{g}_{\theta}^{-1} \right.  \\
                   & \qquad+ \left. \frac{ 1 }{ 2 }\hat{A}^{(2,1)\top}  \mathrm{vec} \,\hat{g}_{\theta}^{-1}
  + \frac{1}{2}\paren{ \hat{A}^{(1,1)} \otimes \hat{A}^{(3,0)} }^{\top}   \mathrm{vec} \paren{\hat{g}_{\theta}^{-1}}^{\otimes{ 2 }}
  \right\rbrace \int_{-\infty}^{0} t(t+1)\e^{t}dt                                                                                        \\
                   & \quad  + \frac{1}{n\hat{c}^{2}} \braces{
                    \frac{ 1 }{ 2 }\hat{A}^{\paren{ 0,2 }} + \frac{1}{2}\paren{ A^{(1,1)\top} }^{\otimes{ 2 }} \mathrm{vec} \,\hat{g}_{\theta}^{-1}
  }\int_{-\infty}^{0} t(t^{2}-2)\e^{t}dt + \bigOp{\frac{1}{n^{3/2}}}                                                                     \\
                   & = -1 +\frac{ 1 }{ n\hat{c} }\left\lbrace
  \pd_{\gamma}\log \hat{\pi} + D_{\theta}^{\top}\log \hat{\pi}\otimes \hat{A}^{(1,1)\top}   \mathrm{vec} \,\hat{g}_{\theta}^{-1} \right. \\
                   & \qquad+ \left. \frac{ 1 }{ 2 }\hat{A}^{(2,1)\top}  \mathrm{vec} \,\hat{g}_{\theta}^{-1}
  + \frac{1}{2}\paren{ \hat{A}^{(1,1)} \otimes \hat{A}^{(3,0)} }^{\top}  \mathrm{vec} \paren{\hat{g}_{\theta}^{-1}}^{\otimes{ 2 }}
  \right\rbrace                                                                                                                          \\
                   & \quad  - \frac{4}{n\hat{c}^{2}} \braces{
  \frac{1}{2}\hat{A}^{\paren{ 0,2 }} + \frac{1}{2}\paren{ \hat{A}^{(1,1)} }^{\otimes{ 2 }\top} \mathrm{vec} \,\hat{g}_{\theta}^{-1}
  }+\bigOp{\frac{1}{n^{3/2}}}.
\end{align}
Then, we have the posterior mean of $ \gamma $ as
\begin{align}
  \hat{\gamma}^{B}_{\pi} = \Exp[\gamma][\sample] & = \cML- \frac{ 1 }{ n\hat{c} }
  +\frac{ 1 }{ n^{2}\hat{c}^{2} }\left\lbrace
  \pd_{\gamma}\log \hat{\pi} + D_{\theta}^{\top}\log \hat{\pi}\otimes \hat{A}^{(1,1)\top}  \mathrm{vec} \,\hat{g}_{\theta}^{-1}\right.     \\
                                                 & \qquad+ \left. \frac{ 1 }{ 2 }\hat{A}^{(2,1)\top}  \mathrm{vec} \,\hat{g}_{\theta}^{-1}
  + \frac{1}{2}\paren{ \hat{A}^{(1,1)} \otimes \hat{A}^{(3,0)} }^{\top}  \mathrm{vec} \paren{\hat{g}_{\theta}^{-1}}^{\otimes{ 2 }}
  \right\rbrace                                                                                                                            \\
                                                 & \quad  - \frac{4}{n^{2}\hat{c}^{3}} \braces{
  \frac{ 1 }{ 2 }\hat{A}^{\paren{ 0,2 }} + \frac{1}{2}\paren{ \hat{A}^{(1,1)\top} }^{\otimes{ 2 }} \mathrm{vec} \,\hat{g}_{\theta}^{-1}
  }+\bigOp{\frac{1}{n^{5/2}}}.
\end{align}
Here, let $ \cML^{*}\coloneqq \cML- \frac{ 1 }{ n\hat{c} } $ be a bias-correrated MLE of $ \gamma $.
Since the consistency of MLE and the low of large numbers, it holds that
\begin{align}
  n^{2}\paren{{\gamma}^{B}_{\pi} - \cML^{*}} & \Pto
  \frac{ 1 }{ {c}^{2} }\left\lbrace
  \pd_{\gamma}\log {\pi} + D_{\theta}^{\top}\log {\pi}\otimes {A}^{(1,1)\top}\mathrm{vec} \,g_{\theta}^{-1}\right.   \\
                                             & \qquad+  \frac{ 1 }{ 2 }{A}^{(2,1)\top} \mathrm{vec}\,g_{\theta}^{-1}
  + \frac{1}{2}{A}^{(1,1)\top} \otimes {A}^{(3,0)\top} \mathrm{vec} \paren{g_{\theta}^{-1}}^{\otimes{ 2 }}
  \\
                                             & \quad  \left.
  - \frac{2}{{c}} {A}^{\paren{ 0,2 }}
  - \frac{2}{{c}} \paren{ A^{(1,1)\top} }^{\otimes{ 2 }} \mathrm{vec} \,g_{\theta}^{-1}\right\rbrace                 \\
                                             & = \frac{ 1 }{ {c}^{2} }\left\lbrace
  \pd_{\gamma}\log {\pi} +  A^{(1,1)\top}g_{\theta}^{-1}D_{\theta}\log \pi \right.                                   \\
                                             & \qquad+  \frac{ 1 }{ 2 }{A}^{(2,1)\top} \mathrm{vec} g_{\theta}^{-1}
  + \frac{1}{2}{A}^{(1,1)\top} \otimes {A}^{(3,0)\top}\mathrm{vec} \paren{ g_{\theta}^{-1} }^{\otimes{ 2 }}
  \\
                                             & \quad  \left.
  - \frac{2}{{c}} {A}^{\paren{ 0,2 }}
  - \frac{2}{{c}} A^{(1,1)\top}g_{\theta}^{-1}A^{(1,1)}\right\rbrace.
\end{align}
The moment matching prior $ \pi_{MM}^{\gamma} $ is required to make the right-hand side of the above equation zero.
Then, the partial differential equation which gives the condition of the moment matching prior $ \pi_{MM}^{\gamma} $ is
\begin{align}
  \pd_{\gamma}\log {\pi} +  A^{(1,1)\top}g_{\theta}^{-1}D_{\theta}\log \pi
   & = -\frac{ 1 }{ 2 }{A}^{(2,1)\top} \mathrm{vec} g_{\theta}^{-1}
  - \frac{1}{2}{A}^{(1,1)\top} \otimes {A}^{(3,0)\top}\mathrm{vec} \paren{ g_{\theta}^{-1} }^{\otimes{ 2 }} \\
   & \quad + \frac{2}{{c}} {A}^{\paren{ 0,2 }} + \frac{2}{{c}} A^{(1,1)\top}g_{\theta}^{-1}A^{(1,1)}.
\end{align}
This condition is also represented as
\begin{align}
  \pd_{\gamma}\log \pi + \frac{1}{2} A^{(2,1)}_{ij}g^{ij}-2\pd_{\gamma}\log c
  + A^{(1,1)}_{i}g^{ij}\braces{ \pd_{j}\log \pi - 2\pd_{j}\log c +\frac{ 1 }{ 2 }A^{(3,0)}_{jkm}g^{km} }=0.
\end{align}

Note that $ A^{(0,2)} = \pd_{\gamma} c$ and $ A^{(1,1)}_{i} = \pd_{i} c $.

\subsection{Moment matching prior for the regular parameter}

By integrating \eqref{eq:posterior_distribution_marginal_u}, the posterior mean of $ u $ is
\begin{align}
  \Exp[u][\sample] & = \int_{\bR^{d}} u\pi\paren{ u;\sample }du                                                                                                           \\
                   & = \int_{\bR^{d}} u\phi_{d}(u;0,\hat{g}_{\theta}^{-1})du
  + \frac{ 1 }{ \sqrt{n} }\int_{\bR^{d}} u\braces{ u^{\top}\frac{ 1 }{ \hat{\pi} }D_{\theta}\hat{\pi}  }\phi_{d}(u;0,\hat{g}_{\theta}^{-1})du                                   \\
                   & \quad - \frac{ 1 }{ \sqrt{n} }\int_{\bR^{d}} u\braces{u^{\top}   \frac{ 1 }{ \hat{c} } \hat{A}^{(1,1)}}\phi_{d}(u;0,\hat{g}_{\theta}^{-1})du               \\
                   & \quad +\frac{ 1 }{ \sqrt{n} }\int_{\bR^{d}} u\braces{ \frac{ 1 }{ 3! } \hat{A}^{(3,0)\top}  u^{\otimes{ 3 }} }\phi_{d}(u;0,\hat{g}_{\theta}^{-1})du
  + \bigOp{ \frac{ 1 }{ n } }                                                                                                                                              \\
                   & = 0+ \frac{ 1 }{ \sqrt{n} }  \hat{g}_{\theta}^{-1}  D_{\theta}\log \hat{\pi}  - \frac{ 1 }{ \sqrt{n} \hat{c}} \hat{g}_{\theta}^{-1}\hat{A}^{(1,1)}   \\
                   & \quad +\frac{ 1 }{ 3! \sqrt{n}} \paren{ I_{d} \otimes  \hat{A}^{(3,0)\top}   } \int_{\bR^{d}}  u^{\otimes{ 4 }} \phi_{d}(u;0,\hat{g}_{\theta}^{-1})du
  + \bigOp{ \frac{ 1 }{ n } }                                                                                                                                              \\
                   & = 0+ \frac{ 1 }{ \sqrt{n} }  \hat{g}_{\theta}^{-1}  D_{\theta}\log \hat{\pi}  - \frac{ 1 }{ \sqrt{n} \hat{c}} \hat{g}_{\theta}^{-1}\hat{A}^{(1,1)}   \\
                   & \quad +\frac{ 3 }{ 3! \sqrt{n}} \paren{ I_{d} \otimes  \hat{A}^{(3,0)\top}   } \symope{4} \mathrm{vec} \paren{ \hat{g}_{\theta}^{-1} }^{\otimes{ 2 }}
  + \bigOp{ \frac{ 1 }{ n } }                                                                                                                                              \\
                   & =  \frac{ 1 }{ \sqrt{n} }  \hat{g}_{\theta}^{-1}  D_{\theta}\log \hat{\pi}  - \frac{ 1 }{ \sqrt{n} \hat{c}} \hat{g}_{\theta}^{-1}\hat{A}^{(1,1)}     \\
                   & \quad +\frac{ 3 }{ 3! \sqrt{n}} \paren{ I_{d} \otimes  \hat{A}^{(3,0)\top}   } \mathrm{vec} \paren{ \hat{g}_{\theta}^{-1} }^{\otimes{ 2 }}
  + \bigOp{ \frac{ 1 }{ n } } .
\end{align}
Since $ \sqrt{n}\paren{ \hat{\theta}_{\pi}^{B} -\qML}=\Exp[u][\sample] $, the consistency of MLE and the low of large numbers, it holds that
\begin{align}
  n \paren{ \hat{\theta}_{\pi}^{B}-\qML }\Pto g_{\theta}^{-1} D_{\theta}\log \pi
  - \frac{ 1 }{ c }g_{\theta}^{-1}A^{(1,1)}+ \frac{ 1 }{ 2 }\paren{ I_{d}\otimes A^{(3,0)\top} }\mathrm{vec}\paren{g_{\theta}^{-1}}^{\otimes{ 2 }}.
\end{align}
Then, the partial differential equation which gives the condition of the moment matching prior $ \pi_{MM}^{\theta} $ is
\begin{align}
  g_{\theta}^{-1} D_{\theta}\log \pi & =
  \frac{ 1 }{ c }g_{\theta}^{-1}A^{(1,1)} - \frac{ 1 }{ 2 }\paren{ I_{d}\otimes A^{(3,0)\top} }\mathrm{vec}\paren{g_{\theta}^{-1}}^{\otimes{ 2 }}.
\end{align}
This condition is also represented as
\begin{align}
  \pd_{j}\log \pi & =  \frac{ 1 }{ 2 }\pd_{j}\log \paren{ \det g } + \frac{ 1 }{ 2 }\aChris[e]{km,j}g_{\theta}^{km}
\end{align}
for $ j=1,\ldots,d $ since $ A^{(3,0)}_{jkm} = - \pd_{j}g_{km} - \aChris[e]{km,j} $ and $ \paren{ \pd_{j}g_{km} }g^{km}=\pd_{j}\log \paren{ \det g_{\theta}  } $.

	\end{appendices}


	\bibliography{sn-bibliography}

\end{document}